\newtheorem{theorem}{Theorem}
\newtheorem{corollary}[theorem]{Corollary}
\newtheorem{definition}[theorem]{Definition}
\newtheorem{lemma}[theorem]{Lemma}
\newtheorem{proposition}[theorem]{Proposition}
\newenvironment{proof}[1][Proof]{\textbf{#1.} }{\ \rule{0.5em}{0.5em}}
\begin{document}

\title{Splitting cubic circle graphs}
\author{Lorenzo Traldi\\Lafayette College\\Easton, Pennsylvania 18042}
\date{}
\maketitle

\begin{abstract}
We show that every 3-regular circle graph has at least two pairs of twin
vertices; consequently no such graph is prime with respect to the split
decomposition. We also deduce that up to isomorphism, $K_{4}$ and $K_{3,3}$
are the only 3-connected, 3-regular circle graphs.

\bigskip

Keywords. circle graph, split decomposition, regular graph

\bigskip

Mathematics Subject\ Classification. 05C62

\end{abstract}

\section{Introduction}

Circle graphs have been introduced several times, in several contexts. The
intersection graph of a family of chords in a circle seems to have first been
mentioned in print by Zelinka \cite{Z}; he gave credit for the idea to Kotzig,
whose seminal work \cite{K} founded the special theory of 4-regular graphs.
Brahana's separation matrix \cite{Br} -- in essence, the adjacency matrix of a
circle graph -- was introduced decades earlier, in connection with the
geometry of surfaces. Circle graphs achieved broad recognition in the 1970s,
when Even and Itai \cite{EI} considered circle graphs in relation to the
analysis of permutations using stack and queues; Bouchet \cite{Bold} and Read
and Rosenstiehl \cite{RR} discussed the interlacement graphs of double
occurrence words in connection with the famous Gauss problem of characterizing
generic curves in the plane; and Cohn and Lempel \cite{CL} related the cycle
structure of a certain kind of permutation to the $GF(2)$-nullity of an
associated link relation matrix (which is also the adjacency matrix of a
circle graph).\ An account of the early combinatorial theory appears in
Golumbic's classic book \cite{Go}.

\begin{definition}
Let $W=w_{1}...w_{2n}$ be a double occurrence word, i.e., a sequence in which
$n$ letters appear, each letter appearing twice. Then the \emph{interlacement
graph} $\mathcal{I}(W)$ is a graph with $n$ vertices, labeled by the letters
appearing in $W$. Two vertices $a$ and $b$ of $\mathcal{I}(W)$ are adjacent if
and only if the corresponding letters appear in $W$ in the order $abab$ or
$baba$. A simple graph that can be realized as an interlacement graph of a
double occurrence word is a \emph{circle graph}.
\end{definition}

During the last forty years the theory of circle graphs has been sharpened
considerably. Polynomial-time recognition algorithms were developed before the
new millennium by Bouchet \cite{Bec}, Naji \cite{N} and Spinrad \cite{Sp}.
More recently, Courcelle \cite{C} has observed that circle graphs are well
described in the framework of monadic second-order logic, and Gioan, Paul,
Tedder and Corneil have provided the first subquadratic recognition algorithm
\cite{GPTC, GPTC1}.

The crucial tool used to design recognition algorithms for circle graphs is
the split decomposition of Cunningham \cite{Cu}. We recall only the basic
definition here, and defer to the literature (\cite{C, Cu, GPTC1} for
instance) for thorough explanations of this important idea.

\begin{definition}
Let $G$ be a simple graph. A \emph{split} $(V_{1}$, $X_{1}$; $V_{2}$, $X_{2})$
of $G$ is given by a partition $V(G)=V_{1}\cup V_{2}$ with $\left\vert
V_{1}\right\vert $, $\left\vert V_{2}\right\vert \geq2$ and subsets
$X_{i}\subseteq V_{i}$ with these properties: the complete bipartite graph
with vertex-classes $X_{1}$ and $X_{2}$ is a subgraph of $G$, and $G$ does not
have any other edge from $V_{1}$ to $V_{2}$.
\end{definition}

Every simple graph of order 4 has a split. A graph with five or more vertices
that has no split is said to be \emph{prime}.

Three types of splits are particularly simple. Let $G$ be a graph with four or
more vertices.

\begin{itemize}
\item Suppose $v_{1}$ and $v_{2}$ are twin vertices of $G$, i.e., they have
the same neighbors outside $\{v_{1},v_{2}\}$. Then $G$ has a split with
$V_{1}=X_{1}=\{v_{1},v_{2}\}$.

\item Suppose $G$ is not connected. Let $H$ be a union of some but not all
connected components of $G$, such that $H$ includes at least two vertices. If
$h\in V(H)$ then $G$ has a split with $V_{1}=V(H)$ and $X_{1}=\varnothing$ or
$G$ has a split with $V_{1}=V(H-h)$ and $X_{2}=\{h\}$.

\item Suppose $G$ has a cutpoint $v$. Let $H$ be a union of some but not all
connected components of $G-v$, such that $H$ includes at least two vertices.
Then $G$ has a split with $V_{1}=V(H)$ and $X_{2}=\{v\}$.
\end{itemize}

A fundamental part of the theory of circle graphs and split decompositions is
the following operation, which is motivated by the properties of double
occurrence words \cite{Bold, K, RR}. We use $N(v)$ to denote the open
neighborhood of $v$ in $G$, i.e., the set of vertices $w\neq v$ such that
$vw\in E(G)$.

\begin{definition}
Let $v$ be a vertex of a simple graph $G$. Then the \emph{local complement }of
$G$ with respect to $v$ is the graph $G^{v}$ with $V(G^{v})=V(G)$ and
$E(G^{v})=\{xy\mid$ either $x\notin N(v)$ and $xy\in E(G)$ or $x,y\in N(v)$
and $xy\notin E(G)\}$.
\end{definition}

In some references this operation is called \emph{simple} local
complementation, to distinguish it from a related operation that involves
looped vertices. We consider only simple graphs in this paper, so we need not
be so careful here. Two important properties of local complementation are that
$G$ is a circle graph if and only if $G^{v}$ is a circle graph, and that $G$
has a split $(V_{1}$, $X_{1}$; $V_{2}$, $X_{2})$ if and only if $G^{v}$ has a
split $(V_{1}$, $Y_{1}$; $V_{2}$, $Y_{2})$.

A central result involving circle graphs is Bouchet's characterization by
obstructions \cite{Bco}. Recall that \emph{local equivalence} is the
equivalence relation generated by isomorphisms and local complementations.

\begin{theorem}
\cite{Bco} A simple graph $G$ is a circle graph if and only if no graph
locally equivalent to $G$ has one of the graphs of Figure \ref{ccircf1} as an
induced subgraph.
\end{theorem}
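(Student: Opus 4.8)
The plan is to prove the two implications separately, since the ``only if'' direction is elementary and the ``if'' direction carries essentially all of the difficulty.

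For ``only if'': as recalled above, $G$ is a circle graph if and only if each $G^{v}$ is, so the class of circle graphs is closed under local complementation; and deleting both occurrences of a letter from a double occurrence word realizing $G$ shows the class is also closed under induced subgraphs. Hence it suffices to check that none of the three graphs in Figure \ref{ccircf1} is itself a circle graph. I would verify this by a finite computation -- either directly, by attempting to arrange the required chords around a circle and deriving a contradiction from the interlacement conditions, or by exhibiting an invariant (such as the $GF(2)$-rank of the adjacency matrix together with the identity) that is preserved under local complementation and induced subgraphs and that separates these three graphs from all circle graphs. Given this, every graph locally equivalent to a circle graph is again a circle graph, hence contains no induced copy of any graph in Figure \ref{ccircf1}.

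For ``if'': this is the substantive direction, and I would attack it by induction on $|V(G)|$, following Bouchet's development through isotropic systems (one can phrase matters equivalently in terms of delta-matroids). The dictionary is that circle graphs correspond to the \emph{graphic} isotropic systems, local complementation to changing the distinguished Eulerian basis, vertex deletion to the analogous minor operation, and the three graphs of Figure \ref{ccircf1} to the minimal non-graphic isotropic systems. As a preliminary simplification one can invoke Cunningham's split decomposition to reduce to the case in which $G$ is prime -- a graph is a circle graph exactly when all the prime members of its split decomposition are, and freedom from the obstructions is inherited by those pieces -- which tightens the structural control. Then, choosing a vertex $v$, the inductive hypothesis produces a chord diagram for $G-v$ (which is still obstruction-free, since vertex deletion commutes suitably with local complementation), and the remaining task is to insert a chord realizing $v$, after first replacing the diagram by a locally equivalent one if necessary; the obstruction-freeness of the whole local equivalence class of $G$ is precisely what should make such an insertion possible.

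The main obstacle is exactly that insertion step: analyzing, up to local equivalence, all the ways a single new vertex can be attached to a graph already known to be a circle graph, and showing that the only failures force one of the graphs of Figure \ref{ccircf1} to appear somewhere in the local equivalence class. This is where Bouchet's study of the exchange structure of isotropic systems is indispensable and where no shortcut is available; accordingly, in the present paper I would simply cite \cite{Bco} and use the theorem as a black box, as the surrounding exposition does.
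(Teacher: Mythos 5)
The paper offers no proof of this statement---it is quoted directly from Bouchet \cite{Bco} and used as a black box---and your proposal, after a correct and standard sketch of the easy direction (closure of circle graphs under local complementation and vertex deletion, plus a finite check that $W_{5}$, $BW_{3}$ and $W_{7}$ are not circle graphs), likewise defers the substantive direction to \cite{Bco}. This matches the paper's treatment exactly, so there is nothing further to compare; your outline of the isotropic-systems induction is a plan rather than a proof, but you correctly flag it as such and fall back on the citation, just as the paper does.
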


%

\begin{figure}
[ptb]
\begin{center}
\includegraphics[
trim=1.599947in 7.958149in 1.476744in 1.738885in,
height=1.0101in,
width=4.0949in
]%
{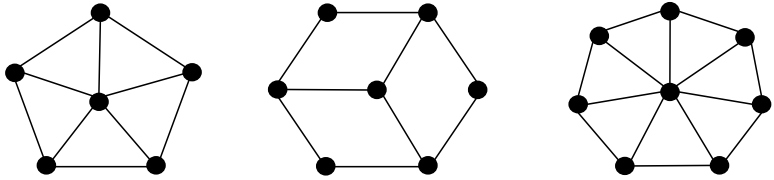}%
\caption{Bouchet's obstructions: $W_{5}$, $BW_{3}$ and $W_{7}$.}%
\label{ccircf1}%
\end{center}
\end{figure}

Observe that $W_{5}$ and $W_{7}$ are locally equivalent to the graphs pictured
in Figure \ref{ccircf14}, both of which are prime, 3-connected, and 3-regular.
The purpose of this paper is to prove that these local equivalences are no
mere coincidence:

\begin{theorem}
\label{sharp}Let $G$ be a 3-regular circle graph. Then $G$ has at least two
disjoint pairs of twin vertices.
\end{theorem}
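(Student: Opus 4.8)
The plan is to work with a double occurrence word $W$ realizing $G$ and analyze the structure of $W$ directly, since twin vertices in $\mathcal{I}(W)$ correspond to pairs of letters whose four occurrences interleave in a very restricted pattern. Specifically, two letters $a,b$ are twins in $\mathcal{I}(W)$ precisely when, after deleting all other letters, the trace of $W$ on $\{a,b\}$ is one of $aabb$, $abba$, $aabb$ (non-interlaced, giving non-adjacent twins) or $abab$-type (interlaced, giving adjacent twins), AND every other letter $c$ either interlaces both of $a,b$ or neither. I would reformulate the goal as: find two disjoint such pairs of letters in $W$.

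First I would reduce to the connected case: if $G$ is disconnected, each component is itself a 3-regular circle graph (or has fewer than, say, a handful of vertices), so it suffices to produce twins in two components, or two pairs in one component — and a disconnected graph trivially supplies splits, but we genuinely need \emph{twin} pairs for the theorem, so I'd handle small components ($K_4$, $K_{3,3}$, the prism, etc.) by inspection and note each has two disjoint twin pairs. Assuming $G$ connected, I would then use Bouchet's characterization (the theorem quoted above) indirectly: since $G$ is a circle graph, no graph locally equivalent to it contains $W_5$, $BW_3$, or $W_7$ as an induced subgraph. The key move is to pick a vertex $v$ and examine the induced subgraph on $N[v] = \{v\} \cup N(v)$, which has exactly $4$ vertices since $G$ is $3$-regular; call the neighbors $a,b,c$. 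The three vertices $a,b,c$ induce one of four graphs (empty, one edge, two edges, triangle), and I would show that local complementation at $v$ together with the obstruction-freeness forces a lot: e.g. if $\{a,b,c\}$ is a triangle then $\{v,a,b,c\}$ is a $K_4$-component (by $3$-regularity), done; otherwise I track how the second-neighborhood attaches.

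The heart of the argument is a local/counting analysis: because every vertex has degree $3$, the "boundary" between $N[v]$ and the rest of $G$ is small, and I would argue that some vertex near $v$ must be twinned with another. I expect the main obstacle to be organizing the case analysis on the isomorphism type of the graph induced on $N(v)$ and on how the $\le 6$ edges leaving $N[v]$ distribute among $a,b,c$; in the awkward cases (two edges among $a,b,c$, or a path) one likely has to apply a local complementation to convert the configuration into one containing a forbidden induced subgraph unless a twin pair appears. A cleaner alternative I would try first: combine $3$-regularity with the fact (stated in the excerpt) that splits are preserved under local complementation, pass to a \emph{reduced} graph in the split decomposition, and show the prime quotients arising from a $3$-regular circle graph are too small to be prime — forcing $G$ to decompose via twin-splits specifically. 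Producing \emph{two disjoint} pairs, rather than one, I'd get by iterating: remove one twin pair, check the resulting graph is still a circle graph with maximum degree $\le 3$ and enough structure to rerun the argument, or better, observe that the first pair found is localized near $v$ while a symmetric argument near a vertex far from $v$ yields a second disjoint pair.
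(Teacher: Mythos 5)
Your proposal is a plan rather than a proof: none of the three routes you sketch is actually carried out, and each one runs into a concrete obstruction. The local analysis around $N[v]$ for a single chosen vertex $v$ cannot succeed on its own, because the theorem only guarantees that twin pairs exist \emph{somewhere} in $G$; in a cubic circle graph with cutpoints (the chained structures of Figure \ref{ccircf3a} are of exactly this kind) the twins can sit in blocks far from $v$, so no finite case analysis of the graph induced on $N(v)$ and the $\le 6$ boundary edges will locate them. The split-decomposition alternative is essentially circular: ``$G$ is not prime'' is the \emph{corollary} of the theorem, not a stepping stone toward it, and a cubic graph can have many splits that are not twin-splits (those arising from cutpoints, or splits with $|X_1|=1$ or $|V_1|\ge 3$), so you would still have to prove that 3-regularity plus being a circle graph forces a twin pair --- which is the entire content of the theorem. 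Finally, your device for producing the \emph{second} disjoint pair by deleting the first pair and iterating fails immediately: deleting two vertices of a cubic graph destroys 3-regularity, so the statement being proved no longer applies to the smaller graph. (The paper instead threads the ``two disjoint pairs'' requirement through every reduction step, after first observing that two intersecting pairs would force a $K_4$ or $K_{3,3}$ component.)

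For comparison, the paper's proof is a global minimal-counterexample argument in three stages: a minimal counterexample is shown to be 2-connected, then 3-connected --- both via surgeries that replace the part of $G$ on one side of a small vertex cut by a smaller cubic circle graph having the same twins, using Lemma \ref{easy} and local complementation --- and the 3-connected case is then killed by taking a shortest (hence chordless) circuit $C$, invoking the essentially unique double occurrence word realizing a chordless cycle (Propositions \ref{unique1} and \ref{unique2}), and showing that the remaining letters cannot be inserted into that word consistently with 3-regularity and 3-connectivity. Your opening instinct to analyze the double occurrence word directly is the one ingredient that does appear in the paper, but only in the final 3-connected stage and anchored to a shortest cycle rather than to the neighborhood of a vertex; the obstruction-set idea and the fixed-vertex neighborhood analysis do not lead anywhere here.
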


%

\begin{figure}
[ptb]
\begin{center}
\includegraphics[
trim=2.941592in 8.707630in 1.876943in 0.937677in,
height=1.049in,
width=2.7864in
]%
{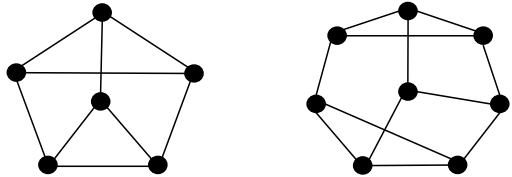}%
\caption{Prime, 3-connected cubic graphs locally equivalent to $W_{5}$ and
$W_{7}$.}%
\label{ccircf14}%
\end{center}
\end{figure}

Theorem \ref{sharp} immediately implies the following.

\begin{corollary}
Let $G$ be a 3-regular circle graph. Then $G$ is not prime.
\end{corollary}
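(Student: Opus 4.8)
My plan is to obtain this corollary as an almost immediate consequence of Theorem~\ref{sharp}, together with the elementary observations about splits recorded just after the definition of a split. First I would dispose of the trivial case: a cubic graph has an even number of vertices and at least four of them, so $|V(G)|\in\{4,6,8,\dots\}$; if $|V(G)|=4$, that is $G=K_4$, then $G$ has fewer than five vertices and so is not prime directly by definition. Hence I may assume $|V(G)|\geq 6$.

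For the main case I would invoke Theorem~\ref{sharp} to produce a pair of twin vertices $v_1,v_2$ of $G$ --- only one such pair is needed here, so the full strength of Theorem~\ref{sharp} (two disjoint pairs) is more than enough. Setting $V_1=X_1=\{v_1,v_2\}$, $V_2=V(G)\setminus\{v_1,v_2\}$, and $X_2=N(v_1)\setminus\{v_1,v_2\}=N(v_2)\setminus\{v_1,v_2\}$, I would check that $(V_1,X_1;V_2,X_2)$ is a split: $V_1$ has two vertices and $V_2$ has $|V(G)|-2\geq 4$ of them; every vertex of $X_2$ is a common neighbor of $v_1$ and $v_2$, so the complete bipartite graph between $X_1$ and $X_2$ is a subgraph of $G$; and the defining property of twins is exactly that neither $v_1$ nor $v_2$ has a neighbor in $V_2\setminus X_2$, so $G$ has no other edge between $V_1$ and $V_2$. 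This is precisely the split described in the first bulleted item following the definition of split, and a graph possessing a split is, by definition, not prime. Hence $G$ is not prime.

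I do not expect a genuine obstacle anywhere in this deduction; the only point needing a moment's attention is that ``prime'' is defined solely for graphs with five or more vertices, which is what makes the one-line separate treatment of $K_4$ necessary. All the real difficulty is, of course, packed into Theorem~\ref{sharp} itself, whose proof must actually locate the twin vertices --- presumably by analysing a chord-diagram (double occurrence word) realisation of $G$ and arguing that the requirement that every chord cross exactly three others forces two disjoint pairs of chords each crossing the same set of other chords; but Theorem~\ref{sharp} is available to us, so the corollary follows as above.
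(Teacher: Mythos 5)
Your proof is correct and is exactly the deduction the paper intends: the paper states the corollary as an immediate consequence of Theorem~\ref{sharp} via the first bulleted observation that a pair of twin vertices $v_1,v_2$ yields a split with $V_1=X_1=\{v_1,v_2\}$. Your separate treatment of $K_4$ and the explicit verification of the split axioms are fine, if more careful than the paper bothers to be.
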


In\ Section 4 we deduce another consequence of Theorem \ref{sharp}.

\begin{corollary}
\label{connect}Let $G$ be a 3-regular circle graph, which is not isomorphic to
$K_{4}$ or $K_{3,3}$. Then $G$ is not 3-connected.
\end{corollary}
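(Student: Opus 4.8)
The plan is to prove the contrapositive: every $3$-connected $3$-regular circle graph is isomorphic to $K_{4}$ or to $K_{3,3}$. The only outside tool needed is Theorem \ref{sharp}, which supplies two disjoint pairs of twin vertices; everything else comes from pushing on $3$-connectivity, and the natural organizing split is into adjacent twins and non-adjacent twins.

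First I would handle adjacent twins. If one of the twin pairs $\{u,u'\}$ has $uu'\in E(G)$, then $3$-regularity gives $u$ and $u'$ exactly two common neighbours $a,b$, and $\{u,u'\}$ has no neighbour outside $\{u,u',a,b\}$; hence removing $\{a,b\}$ disconnects $\{u,u'\}$ from anything else. As $G$ is $3$-connected, ``anything else'' is empty, so $|V(G)|=4$ and $G\cong K_{4}$. From now on I assume both twin pairs consist of non-adjacent vertices. A companion observation: if $\{u,u'\}$ is such a pair with common neighbourhood $\{a,b,c\}$ and some two of $a,b,c$ are adjacent, say $ab\in E(G)$, then $N(a)=\{u,u',b\}$ and $N(b)=\{u,u',a\}$, so $a,b$ are adjacent twins and the first step forces $G\cong K_{4}$ --- impossible, since $K_{4}$ has no non-adjacent twins. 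Thus $\{a,b,c\}$ is independent, and each of $a,b,c$, already joined to $u$ and $u'$, has a single remaining neighbour $a^{*},b^{*},c^{*}$ lying outside $S:=\{u,u',a,b,c\}$.

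Now the main case. Because $|S|=5$ is odd while every cubic graph has an even number of vertices, $T:=V(G)\setminus S$ is non-empty, and the independent triple $\{a,b,c\}$ is then a $3$-element cutset, since deleting it isolates both $u$ and $u'$. The clean subcase is $a^{*}=b^{*}=c^{*}$: then all three edges leaving $S$ land on one vertex $d$, so $N(d)=\{a,b,c\}$, the sets $\{u,u',d\}$ and $\{a,b,c\}$ induce a $K_{3,3}$ in which every vertex already has full degree, and $G\cong K_{3,3}$. The hard part is the remaining subcase, where $a^{*},b^{*},c^{*}$ are not all equal, so $|T|\ge 3$. Here I would invoke the second twin pair $\{w,w'\}$ --- again non-adjacent, with independent common neighbourhood, and disjoint from $\{u,u'\}$ --- and play it against the cut: if $\{w,w'\}\subseteq\{a,b,c\}$ the twin condition forces two of $a^{*},b^{*},c^{*}$ to coincide, and one chases the definitions to see that this again ends at $K_{3,3}$; whereas if $w\notin\{a,b,c\}$ then $w\in T$, the neighbourhood of $w$ misses $\{u,u'\}$, and $\{w,w'\}$ hides behind a second $3$-cut exactly as $\{u,u'\}$ hides behind $\{a,b,c\}$. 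The crux is then a counting argument: each false-twin pair together with its three common neighbours is a $5$-set joined to the rest of $G$ by only three edges, with no $2$-cut available; pressing this against $3$-connectivity should force $|V(G)|\le 6$, and then $K_{4}$ and $K_{3,3}$ are the only cubic $3$-connected graphs on at most six vertices.

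I expect that counting step to be the real obstacle, because keeping everything inside a graph of bounded size while respecting $3$-connectivity is exactly where a cubic graph has room to grow. A possible shortcut I would pursue alongside it is an inductive reduction: local complementation at $u$ converts the independent set $\{a,b,c\}$ into a triangle without leaving the class of circle graphs, and then deleting $u$ and $u'$ yields a cubic circle graph on $|V(G)|-2$ vertices; if that graph can be shown to remain $3$-connected, induction finishes the proof, and establishing that connectivity claim is the single point where I anticipate the most care will be needed.
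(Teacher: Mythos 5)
Your two easy cases are sound and coincide with the paper's: adjacent twins force $G\cong K_{4}$, and when the three common neighbours $a,b,c$ of a non-adjacent twin pair send their remaining edges to a single vertex you get $K_{3,3}$. The gap is exactly where you predicted it: the ``counting argument'' that is supposed to force $\left\vert V(G)\right\vert \leq 6$ in the remaining case is not supplied, and no argument of that shape can succeed. Three-connectivity is perfectly compatible with the presence of two disjoint $5$-sets, each joined to the rest of the graph by only three edges: take any large $3$-connected cubic graph and expand two of its vertices into $K_{2,3}$ gadgets (a non-adjacent twin pair together with its three common neighbours); the result is cubic, $3$-connected, has two disjoint pairs of non-adjacent twins with independent common neighbourhoods and distinct outgoing attachment vertices, and is as large as you please. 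Such graphs are of course not circle graphs --- but that is precisely what must be proved, so the circle-graph hypothesis has to re-enter the argument at this point, and in your sketch it never does after the initial appeal to Theorem \ref{sharp}.

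The paper closes this case by invoking Theorem \ref{sharp} a second time on a derived graph rather than by counting. It splits on whether a vertex of one twin pair is adjacent to a vertex of the other. If so, the two pairs interlock and a short $2$-cut argument gives $K_{3,3}$ directly. If not, Menger's theorem supplies three internally disjoint induced paths from $v$ to $w$; the subgraph of $G$ they induce is a circle graph, and after removing the degree-$2$ vertices by local complementations and deletions one obtains a cubic circle graph with no twin vertices at all, contradicting Theorem \ref{sharp}. Your proposed shortcut (locally complement at $u$ to turn $\{a,b,c\}$ into a triangle, delete $u$ and $u^{\prime}$, and induct) is much closer in spirit to a workable proof than the counting idea --- the triangle rules out $K_{3,3}$ as the inductive output, and output $K_{4}$ forces $a^{\ast}=b^{\ast}=c^{\ast}$ --- but the preservation of $3$-connectivity under this contraction is the entire content of that route, and you have not established it; as written the proposal does not prove the corollary.
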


Before proceeding we should take a moment to thank Robert Brijder for the many
inspirations provided by our long correspondence and collaboration.

\section{Three lemmas}

In this section we recall three elementary results about double occurrence
words and circle graphs.

\begin{lemma}
If $G$ is a circle graph with a vertex $v$ then $G-v$ and $G^{v}$\ are also
circle graphs.
\end{lemma}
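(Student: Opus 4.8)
The plan is to argue directly from the definition of a circle graph in terms of double occurrence words, by taking a word $W$ with $\mathcal{I}(W)=G$ and modifying it into words that realize $G-v$ and $G^{v}$. (Both assertions are classical; they underlie the references cited in the introduction. I include a proof for completeness.)

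The statement for $G-v$ is immediate. Fix a double occurrence word $W$ with $\mathcal{I}(W)=G$ and let $W'$ be obtained from $W$ by deleting both occurrences of the letter $v$. Then $W'$ is a double occurrence word whose letters are the vertices of $G$ other than $v$, and for any two letters $a,b\neq v$ the occurrences of $a$ and $b$ appear in $W'$ in exactly the relative order in which they appear in $W$. Hence $ab$ is an edge of $\mathcal{I}(W')$ if and only if it is an edge of $G$, so $\mathcal{I}(W')=G-v$ and $G-v$ is a circle graph.

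For $G^{v}$ I would first record two elementary invariances, each verified by inspecting the possible order patterns of a pair of letters: the interlacement graph $\mathcal{I}(W)$ is unchanged when $W$ is reversed, and unchanged when $W$ is cyclically permuted. By a cyclic permutation we may therefore assume that $W$ begins with $v$, say $W=v\,\beta\,v\,\gamma$ where $\beta$ and $\gamma$ are (possibly empty) words. Let $\widetilde{\beta}$ be the reversal of $\beta$ and set $W'=v\,\widetilde{\beta}\,v\,\gamma$, again a double occurrence word on $V(G)$. The goal is to prove $\mathcal{I}(W')=\mathcal{I}(W)^{v}=G^{v}$, which shows $G^{v}$ is a circle graph. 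To this end, classify each letter $a\neq v$ according to whether its two occurrences both lie in $\beta$, both lie in $\gamma$, or lie one in $\beta$ and one in $\gamma$; a one-line check shows the third alternative holds precisely when $a\in N(v)$, and that passing from $W$ to $W'$ preserves this classification, so $v$ keeps the same neighborhood — consistent with the fact that local complementation at $v$ fixes the edges incident to $v$.

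It then remains to compare, for a pair $a,b\neq v$, whether $ab$ is an edge of $\mathcal{I}(W)$ with whether it is an edge of $\mathcal{I}(W')$, using that forming $W'$ only reverses the order of occurrences lying inside $\beta$ and leaves $\gamma$ intact. If at least one of $a,b$ has both occurrences on the same side (both in $\beta$ or both in $\gamma$), the adjacency of $a$ and $b$ is unaffected — consistent with such a pair not being contained in $N(v)$, so its edge is not toggled by local complementation — while if $a,b\in N(v)$ then $ab\in E(\mathcal{I}(W))$ holds exactly when the pair of occurrences of $a$ and the pair of occurrences of $b$ appear in the same relative order within $\beta$ as within $\gamma$, a condition that reversing $\beta$ negates, so that $ab$ is toggled precisely as the definition of $G^{v}$ requires. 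Collecting these cases yields $\mathcal{I}(W')=G^{v}$. The only step that calls for any care is this last case check, together with the degenerate configurations ($\beta$ or $\gamma$ empty, $v$ isolated, and the like); all of these are routine, and the lemma follows.
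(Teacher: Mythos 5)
Your proposal is correct and follows essentially the same route as the paper: delete both occurrences of $v$ to realize $G-v$, and reverse the subword between the two occurrences of $v$ to realize $G^{v}$. The paper states this in two sentences without the case check; your verification of the three occurrence patterns is the routine detail it omits, and it is carried out correctly.
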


\begin{proof}
For $G-v$, take a double occurrence word whose interlacement graph is $G$ and
remove the two occurrences of $v$. For $G^{v}$, take a double occurrence word
whose interlacement graph is $G$ and reverse the subword between the two
occurrences of $v$.
\end{proof}

\begin{lemma}
\label{cyclice}If $W^{\prime}$ is obtained from a double occurrence word $W$
by some sequence of cyclic permutations
\[
w_{1}...w_{2n}\mapsto w_{i}w_{i+1}...w_{2n}w_{1}..w_{i-1}%
\]
and reversals%
\[
w_{1}...w_{2n}\mapsto w_{2n}w_{2n-1}...w_{2}w_{1}%
\]
then $\mathcal{I}(W)=\mathcal{I}(W^{\prime})$.
\end{lemma}

Double occurrence words related as in Lemma \ref{cyclice} are said to be
\emph{cyclically equivalent}. By the way, the converse of Lemma \ref{cyclice}
is false in general; a complete characterization of double occurrence words
with the same connected interlacement graph has been provided by Ghier
\cite{Gh}.

\begin{lemma}
\label{easy}Let $G_{1}$ and $G_{2}$ be disjoint simple graphs, and let $G$ be
a graph obtained by attaching $G_{1}$ to $G_{2}$ with a single edge. Then $G$
is a circle graph if and only if both $G_{1}$ and $G_{2}$ are circle graphs.
\end{lemma}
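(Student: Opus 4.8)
The plan is to prove the two implications separately; the forward one is almost immediate from the deletion lemma, and the reverse one rests on writing down an explicit double occurrence word for $G$. For the forward implication, suppose $G$ is a circle graph. Every edge of $G$ lies in $G_1$, lies in $G_2$, or is the attaching edge, so the subgraph of $G$ induced on $V(G_1)$ is exactly $G_1$; thus $G_1 = G - V(G_2)$ is obtained from $G$ by deleting vertices one at a time, and by repeated application of the first lemma of this section $G_1$ is a circle graph. The same argument gives $G_2$.

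For the reverse implication, say the attaching edge joins $v_1 \in V(G_1)$ to $v_2 \in V(G_2)$, and let $W_1$ and $W_2$ be double occurrence words with $\mathcal{I}(W_1) = G_1$ and $\mathcal{I}(W_2) = G_2$. Using Lemma~\ref{cyclice} I would cyclically permute so that $W_1 = v_1 W_1^{\circ}$ and $W_2 = v_2 W_2^{\circ}$ begin with the attaching vertices, and then show that the double occurrence word
\[
W = v_1 \, v_2 \, W_1^{\circ} \, W_2^{\circ}
\]
satisfies $\mathcal{I}(W) = G$. First, deleting from $W$ every letter outside $V(G_1)$ leaves the word $v_1 W_1^{\circ} = W_1$ (the second occurrence of $v_1$ lies inside $W_1^{\circ}$), so the subgraph of $\mathcal{I}(W)$ induced on $V(G_1)$ equals $\mathcal{I}(W_1) = G_1$; symmetrically, deleting every letter outside $V(G_2)$ leaves $v_2 W_2^{\circ} = W_2$, so the subgraph induced on $V(G_2)$ equals $G_2$. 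It then remains to decide, for $a \in V(G_1)$ and $b \in V(G_2)$, when $a$ and $b$ interlace in $W$. Here the relevant observations are: $W_1^{\circ}$ precedes $W_2^{\circ}$ in $W$; if $a \neq v_1$ then both occurrences of $a$ lie in $W_1^{\circ}$, and if $b \neq v_2$ then both occurrences of $b$ lie in $W_2^{\circ}$; one occurrence of $v_2$ precedes $W_1^{\circ}$ while the other lies inside $W_2^{\circ}$, so $W_1^{\circ}$ lies strictly between the two occurrences of $v_2$; both occurrences of $v_1$ precede $W_2^{\circ}$; and the occurrences of $v_1,v_2,v_1,v_2$ appear in $W$ in that order. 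A short inspection of the four cases according to whether $a = v_1$ and whether $b = v_2$ then shows that $a$ and $b$ interlace in $W$ precisely when $\{a,b\} = \{v_1,v_2\}$, which is exactly their adjacency in $G$; hence $\mathcal{I}(W) = G$ and $G$ is a circle graph.

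The point that requires care — and where a naive construction breaks down — is the exact placement of the repeated attaching letter: inserting the second bridge letter $v_2$ immediately after the first occurrence of $v_1$ is precisely what forces $v_2$ to interlace $v_1$ without interlacing any neighbor of $v_1$ in $G_1$, while writing the remainders $W_1^{\circ}$ and $W_2^{\circ}$ side by side is what keeps every neighbor of $v_1$ from interlacing any vertex of $G_2$ and keeps $v_1$ from interlacing any neighbor of $v_2$. The more obvious interleavings of $W_1$ and $W_2$ typically create one of these three kinds of spurious edge, so the substance of the argument is the verification that the word above creates none of them.
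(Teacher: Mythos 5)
Your proposal is correct and takes essentially the same approach as the paper: the forward direction by vertex deletion, and the converse by cyclically permuting $W_{1}$ and $W_{2}$ and concatenating them with one occurrence of each attaching letter placed adjacent to an occurrence of the other, so that only $v_{1}$ and $v_{2}$ interlace across the two words. Your word $v_{1}v_{2}W_{1}^{\circ}W_{2}^{\circ}$ is just a mirror-image variant of the paper's $w_{1}\ldots w_{2a-1}x_{1}w_{2a}x_{2}\ldots x_{2b}$, and your case check is sound.
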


\begin{proof}
If $G$ is a circle graph, it yields $G_{1}$ and $G_{2}$ through vertex deletion.

For the converse, suppose $W_{1}=w_{1}...w_{2a}$ and $W_{2}=x_{1}...x_{_{2b}}$
are double occurrence words whose interlacement graphs are $G_{1}$ and $G_{2}%
$, respectively. After cyclic permutation, we may presume that the one
additional edge of $G$ attaches $w_{2a}$ to $x_{1}$. Then the word
\[
W=w_{1}...w_{2a-1}x_{1}w_{2a}x_{2}...x_{2b}%
\]
has $G$ as its interlacement graph.
\end{proof}

\section{Proof of Theorem \ref{sharp}}

Before beginning the proof, we should mention that we sometimes say
\textquotedblleft two pairs\textquotedblright\ rather than \textquotedblleft
two disjoint pairs\textquotedblright\ while discussing Theorem \ref{sharp}. In
fact, the theorem is equivalent to the weaker-seeming assertion that every
cubic circle graph has two pairs of twin vertices. For if a cubic graph has
two intersecting pairs of adjacent twins, then the three twin vertices and
their shared neighbor constitute a 4-clique, which must be a whole connected
component; the 4-clique provides three disjoint pairs of twin vertices. If a
cubic graph has two intersecting pairs of nonadjacent twins then the three
twin vertices and their three neighbors constitute a 3,3-biclique, which must
again be a whole connected component; the biclique provides nine disjoint
pairs of twin vertices. (An easy argument shows that a pair of adjacent twins
cannot intersect a pair of nonadjacent twins in any graph.)

\subsection{A minimal counterexample is 2-connected}

Let $G$ be a minimal counterexample to Theorem \ref{sharp}, i.e., a cubic
circle graph that does not have two disjoint pairs of twin vertices, with the
smallest possible number of vertices. Then $G$ must certainly be connected,
for if not then each connected component of $G$ is itself a smaller counterexample.

\begin{proposition}
\label{cut0}Let $G$ be a cubic circle graph that does not have two pairs of
twin vertices, and is of the smallest order for such a graph. If $G$ is not
2-connected then $G$ has precisely two cutpoints.
\end{proposition}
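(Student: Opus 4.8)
The plan is to combine an elementary analysis of the block structure of $G$ with the minimality hypothesis. First I would describe the blocks at a cutpoint. Since $G$ is cubic, the blocks containing a cutpoint $v$ have local degrees at $v$ summing to $3$, so $v$ lies in either two blocks (with local degrees $2$ and $1$) or three blocks (each of local degree $1$). A block in which $v$ has degree $1$ is $2$-connected with a vertex of degree $1$, hence is a single edge $vw$; since $\deg_G(w)=3$, the vertex $w$ lies in a further block and is itself a cutpoint. It follows that every cutpoint lies on a bridge, that a bridge is never an end block (its non-cutpoint endpoint would have degree $1$ in $G$), and that every end block is a nontrivial $2$-connected graph $L$ which is cubic except at its unique cutpoint $v_L$, where $\deg_L(v_L)=2$. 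A parity count forces $|V(L)|$ to be odd, and $|V(L)|=3$ is impossible, so $|V(L)|\ge 5$; and the unique such graph on five vertices ($K_4$ with an edge $ab$ removed together with a new vertex adjacent to $a$ and $b$) has the two disjoint twin pairs $\{a,b\}$ and $\{c,d\}$, neither meeting $v_L$, so it cannot occur as an end block of $G$ and in fact $|V(L)|\ge 7$. The same degree argument disposes of the case of a single cutpoint: a lone cutpoint $v$ lies on a bridge $vw$, and then $w$, of degree $3$, must be a second cutpoint.

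It remains to rule out three or more cutpoints, and here the minimality of $G$ enters. When $G$ has at least three cutpoints, a short argument on the block--cut tree produces two vertex-disjoint end blocks $L_1$ and $L_2$ with distinct cutpoints $v_1$ and $v_2$; let $v_iw_i$ be the bridge at $v_i$, so $w_i\notin V(L_i)$. The idea is to replace each $L_i$ by a single fixed small ``cap'' $C$ --- a nontrivial $2$-connected circle graph that is cubic except at one vertex $v_C$ of degree $2$ --- reattached to the rest of $G$ by the bridge joining $v_C$ to $w_i$. Writing $R=G-V(L_1)-V(L_2)$, both $R$ (an induced subgraph of $G$) and $C$ are circle graphs, so two applications of Lemma \ref{easy} show that the resulting graph $G'$ is a circle graph; it is visibly cubic; and $|V(G')|=2|V(C)|+|V(R)|<|V(L_1)|+|V(L_2)|+|V(R)|=|V(G)|$ provided $2|V(C)|<|V(L_1)|+|V(L_2)|$, which holds whenever $|V(C)|$ is smaller than both end blocks. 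By minimality $G'$ then has two disjoint twin pairs.

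The remaining, and I expect most delicate, step is to transfer those pairs to $G$. A short verification shows that no twin pair of $G'$ can straddle a bridge, nor can it contain $w_1$, $w_2$, or either copy of $v_C$; hence each of the two pairs lies entirely inside one copy of $C$ or entirely inside $V(R)\setminus\{w_1,w_2\}$, and a pair of the latter kind is automatically a twin pair of $G$ (the relevant neighborhoods are unchanged). So the argument closes if one can choose $C$ to have \emph{no} twin pair avoiding $v_C$ at all, forcing both guaranteed pairs into $V(R)$. The obstacle is twofold: one must exhibit such a twin-free cap of small enough order --- presumably on $7$ or $9$ vertices --- for the strict-decrease condition to be met (recall the end blocks already have at least $7$ vertices, so the case in which they are exactly that small is tight and must be treated separately), and in those tight cases one instead uses that two disjoint end blocks each containing an internal twin pair avoiding its cutpoint would already yield two disjoint twin pairs of $G$, so at most one end block is ``twin-rich'', and the structural surplus forced by a third cutpoint must then be mined for a second twin pair in the interior of $G$. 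Organizing these boundary cases cleanly --- in particular identifying the smallest admissible twin-free cap --- is the part of the proof I expect to require the most care; everything else is routine block-structure bookkeeping.
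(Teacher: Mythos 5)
Your reduction to the case of at least three cutpoints is sound, and your block-structure observations (every cutpoint lies on a bridge, every end block is $2$-connected and cubic except for one vertex of degree $2$, hence of odd order at least $5$, and the unique $5$-vertex possibility is excluded because it carries two disjoint twin pairs avoiding its cutpoint) are correct. But the decisive step --- replacing two disjoint end blocks by a fixed twin-free ``cap'' $C$ --- is left genuinely open, and your own preliminary analysis shows why it cannot be closed as stated. Any admissible cap is itself $2$-connected and cubic except at one degree-$2$ vertex, so it has odd order at least $5$; the $5$-vertex cap is ruled out for exactly the same reason as the $5$-vertex end block (its twin pairs $\{a,b\}$ and $\{c,d\}$ would survive into $G'$, so the two pairs guaranteed by minimality could both land inside the caps and say nothing about $G$); hence $\left\vert V(C)\right\vert \geq 7$. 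Since you have only shown $\left\vert V(L_{i})\right\vert \geq 7$, the strict-decrease condition $2\left\vert V(C)\right\vert <\left\vert V(L_{1})\right\vert +\left\vert V(L_{2})\right\vert$ fails precisely in the extremal case, and ``mining the structural surplus'' there is a placeholder, not an argument. You also never exhibit a $7$-vertex twin-free cap, nor verify that one is a circle graph --- a nontrivial burden in its own right. So the proposal does not yet prove the proposition.

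For comparison, the paper's surgery is much lighter and avoids every one of these obstacles. Three cutpoints force (after relabeling) two isthmuses; deleting them leaves a ``middle'' piece containing $x$ and two outer pieces with attachment vertices $y$ and $z$. One removes all vertices between $y$ and $z$ and joins $y$ to $z$ by a single new edge. Lemma \ref{easy} is used in both directions: the pieces of $G$ cut along an isthmus are circle graphs, and their one-edge rejoining $G'$ is again a circle graph. The order drops automatically because the excised middle is nonempty, only $y$ and $z$ change neighborhoods, and neither can acquire a twin, so every twin pair of $G'$ is a twin pair of $G$ --- contradicting minimality with no auxiliary gadget and no extremal case. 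If you wish to salvage the cap-based route, you must construct the cap explicitly, certify it as a circle graph, and supply a genuine argument for the tight case; as written there is a gap at the heart of the proof.
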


%

\begin{figure}
[ptb]
\begin{center}
\includegraphics[
trim=1.471646in 8.977268in 0.806346in 1.208415in,
height=0.6426in,
width=4.6925in
]%
{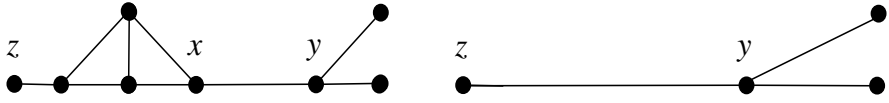}%
\caption{If $G$ has more than two cutpoints, its minimality is contradicted.}%
\label{ccircf3a}%
\end{center}
\end{figure}

\begin{proof}
Suppose $G$ has a cutpoint, $x$. As $x$ is of degree 3, one of the components
of $G-x$ is connected to $x$ by only one edge, $e$. Then $e$ is an isthmus,
and its other end-vertex is a cutpoint; denote the other end-vertex $y$.

Suppose $G$ has a third cutpoint, $z$. Interchanging the labels of $x$ and $y$
if necessary, we may presume that $x$ and $z$ are vertices of the same
component of $G-e$; and interchanging the labels of $z$ and a neighbor of $z$,
we may presume that there is an isthmus between $x$ and $z$. (A portion of an
example is indicated on the left in\ Figure \ref{ccircf3a}.) Let $G^{\prime}$
be the cubic graph obtained from $G$ by removing all vertices between $y$ and
$z$, and then attaching $y$ to $z$ by an edge, as indicated on the right in
Figure \ref{ccircf3a}. Lemma \ref{easy} tells us that $G^{\prime}$ is a circle
graph. Moreover, it is clear that every pair of twins in $G^{\prime}$ is also
a pair of twins in $G$, so $G^{\prime}$ does not have two pairs of twin
vertices. But this contradicts the minimality of $G$.
\end{proof}

\begin{proposition}
\label{cut1}Let $G$ be a cubic circle graph that does not have two pairs of
twin vertices, and is of the smallest order for such a graph. Suppose $G$ has
two cutpoints, $x$ and $y$. Then at least one of $x$, $y$ does not appear on a
3-circuit of $G$.
\end{proposition}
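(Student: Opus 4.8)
My plan is to argue by contradiction: suppose both cutpoints $x$ and $y$ lie on $3$-circuits of $G$, and then produce a smaller cubic circle graph without two pairs of twins, contradicting minimality. First I would recall the structure already extracted in Proposition \ref{cut0}: since $G$ is cubic and $2$-edge-connected failure forces an isthmus at each cutpoint, $G$ has exactly two cutpoints $x$ and $y$, and there is an isthmus $e=xy$ (after relabeling, the isthmus incident to $x$ has its other end a cutpoint, which must be $y$). Deleting $e$ splits $G$ into two pieces $G_x\ni x$ and $G_y\ni y$, each of which is a cubic circle graph except that $x$ (resp. $y$) has degree $2$; equivalently, $G_x$ is obtained from a cubic circle graph by the attachment construction of Lemma \ref{easy}. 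So $x$ has exactly two neighbors in $G_x$, and the hypothesis ``$x$ lies on a $3$-circuit of $G$'' means those two neighbors, call them $a$ and $b$, are adjacent.

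The heart of the argument is a local surgery at $x$. Since $x$ has degree $2$ in $G_x$ with adjacent neighbors $a,b$, the vertices $a$ and $b$ have (in $G$) a common neighbor $x$ and are themselves adjacent, so $x$ together with $a,b$ forms a triangle hanging off the rest of $G_x$ by the edge $e$ and by at most one further edge from each of $a,b$. I would use Lemma \ref{easy} in the direction that lets me excise this triangle: delete $x$ (and hence $e$), and reconnect the rest. Concretely, because $a$ and $b$ each have exactly one neighbor besides $x$ and each other, I can contract the triangle to a single edge or reattach $y$ appropriately, obtaining a cubic graph $G''$ that is still a circle graph by Lemma \ref{easy} (the deleted part and the retained part are each realized by double occurrence words, joined by a single edge). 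The key bookkeeping point, exactly as in the proof of Proposition \ref{cut0}, is that every pair of twins in $G''$ is also a pair of twins in $G$: the surgery only removes vertices and edges in a controlled neighborhood, and no new adjacency among the surviving vertices is created that could merge two previously non-twin vertices into twins. Hence $G''$ has at most as many twin pairs as $G$, so fewer than two disjoint ones, contradicting minimality of $G$.

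The step I expect to be the main obstacle is making the surgery canonical enough that the ``twins are preserved'' claim is actually correct in every case. The triangle $\{x,a,b\}$ can attach to the rest of $G_x$ in a couple of configurations (one external edge at $a$ and one at $b$, or conceivably $a$ or $b$ adjacent to $y$ directly), and for each I need to specify precisely which edge replaces the excised piece so that the result is cubic, is still a circle graph via Lemma \ref{easy}, and genuinely has $y$ no longer forced to be a cutpoint on a $3$-circuit --- or at least has strictly simpler structure. I would handle this by a short case analysis on how $\{a,b\}$ connects outward, in each case exhibiting the replacement edge explicitly and checking the two surviving endpoints are not made into spurious twins; the parenthetical remark in Section 3 (an adjacent-twin pair cannot meet a nonadjacent-twin pair) together with the cubic constraint keeps the case list short. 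Once that is done, the contradiction with minimality closes the argument.
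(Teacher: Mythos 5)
There is a genuine gap, and it sits exactly where you flagged it. Your plan hinges on the claim that after excising the triangle $\{x,a,b\}$ and reattaching, \emph{every} twin pair of the smaller graph $G''$ is already a twin pair of $G$, so that $G''$ inherits ``fewer than two disjoint twin pairs'' and contradicts minimality. That claim is false for any natural version of this surgery. To restore $3$-regularity after deleting the triangle you must create a new adjacency (e.g.\ join the third neighbors $a'$ of $a$ and $b'$ of $b$, which is what smoothing the degree-$2$ vertices by local complementation does), and that new edge changes the neighborhoods of $a'$ and $b'$; the smaller graph can therefore acquire twin pairs through $a'$ or $b'$ that are \emph{not} twins in $G$. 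The paper's proof runs the minimality argument in the opposite direction: each of the two smaller graphs $G_x$, $G_y$ (obtained by deleting $x$ and $y$ and smoothing) \emph{does} have two disjoint twin pairs by minimality, and since $G$ has at most one twin pair, for at least one of them all twin pairs must be ``disrupted.'' The bulk of the proof is then a structural analysis of what $G$ must look like for that disruption to happen (both disrupted pairs must involve the modified vertices, the twins must be nonadjacent, their outside neighbors must be distinct, nonadjacent, and without a common neighbor, on pain of producing a third cutpoint or a twin pair in $G$), after which a \emph{different} smaller circle graph $G'$ is built that provably has the same twins as $G$. So the ``short case analysis'' you defer is not about choosing the replacement edge; it is the heart of the proof, and it cannot be avoided by making the surgery more canonical.

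Two smaller points. First, the analogy with Proposition \ref{cut0} does not transfer: there the excised piece lies between two cutpoints $y$ and $z$ whose special position prevents the new edge $yz$ from creating twins, whereas here $a'$ and $b'$ are generic vertices. Second, your surgery as described leaves three vertices of degree $2$ (namely $y$, $a'$, $b'$ after removing $x$, $a$, $b$), so a single replacement edge cannot restore cubicity; one really does have to cut along the isthmus $xy$ and treat the two sides separately, which is why both $x$ and $y$ being on $3$-circuits enters the argument.
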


\begin{proof}
Suppose instead that $x$ and $y$ both appear on 3-circuits of $G$. If the two
other vertices of the 3-circuit containing $x$ are twins then their other
neighbor is a cutpoint, as illustrated on the left in Figure \ref{ccircf3a}.
But $G$ does not have a third cutpoint, so these two vertices must not be
twins. The same argument shows that the other two vertices of the 3-circuit
containing $y$ are not twins, so the situation in $G$ is as indicated on the
left-hand side of Figure \ref{ccircf3b}. As indicated on the right-hand side
of the figure, we obtain two smaller cubic circle graphs by deleting $x$ and
$y$ and then performing local complementations and deletions at the four
resulting degree-2 vertices. Call the two smaller graphs $G_{x}$ and $G_{y}$.
The minimality of $G$ implies that each of $G_{x}$ and $G_{y}$ has two pairs
of twin vertices.%
\begin{figure}
[ptb]
\begin{center}
\includegraphics[
trim=1.402822in 9.108234in 0.668698in 1.074146in,
height=0.6443in,
width=4.8525in
]%
{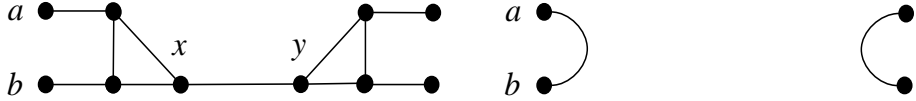}%
\caption{$G$, $G_{x}$ and $G_{y}$.}%
\label{ccircf3b}%
\end{center}
\end{figure}

As $G$ does not have two pairs of twin vertices, it must be the case that for
at least one of $G_{x}$ and $G_{y}$, no pair of twin vertices is also a pair
of twins in $G$. We assume that no pair of twin vertices from $G_{x}$ is also
a twin pair in $G$. Then there are two pairs of twins in $G_{x}$ whose twin
relationship is \textquotedblleft disrupted\textquotedblright\ in $G$.

Such disruption can only occur if each pair of twins includes one of the
vertices of $G_{x}$ denoted $a$ and $b$ in Figure \ref{ccircf3b}, because
these are the only vertices of $G_{x}$ whose neighborhoods in $G$ and $G_{x}$
are not the same. Let $a^{\prime}$ and $b^{\prime}$ be vertices of $G$ that
are twins of $a$ and $b$ in $G_{x}$.

Suppose $a$ and $a^{\prime}$ are adjacent twins in $G_{x}$. If $b$ and
$b^{\prime}$ are adjacent too, then $N_{G}(a^{\prime})=\{a,b,b^{\prime}\}$ and
$N_{G}(b^{\prime})=\{a,a^{\prime},b\}$, so $a^{\prime}$ and $b^{\prime}$ are
adjacent twins in $G$. This is a contradiction, so $b$ and $b^{\prime}$ are
not adjacent. Consequently there is a vertex $z$ such that $N_{G_{x}%
}(b)=N_{G_{x}}(b^{\prime})=\{a,a^{\prime},z\}$; but then $z$ is a cutpoint of
$G$, which separates $\{a,a^{\prime},b,b^{\prime}\}$ from the third neighbor
of $z$. We conclude that $a$ and $a^{\prime}$ are nonadjacent twins in $G_{x}%
$; the same argument shows that $b$ and $b^{\prime}$ are nonadjacent twins in
$G_{x}$.

We claim that Figure \ref{ccircf4} accurately reflects the situations in
$G_{x}$ and $G$. To verify the claim, note first that the vertices labeled $c$
and $d$ in\ Figure \ref{ccircf4} must be distinct, as $G$ has no cutpoint
other than $x$ and $y$. These vertices must also be nonadjacent, because their
being adjacent would imply that $d$ is a twin of $a^{\prime}$ and $c$ is a
twin of $b^{\prime}$, contradicting the hypothesis that $G$ does not have two
pairs of twins. Furthermore, $c$ and $d$ cannot share a neighbor, because a
shared neighbor would be a new cutpoint of $G$. These three observations
justify the claim.%
\begin{figure}
[ptb]
\begin{center}
\includegraphics[
trim=1.870146in 7.904222in 0.672097in 1.607918in,
height=1.1459in,
width=4.4936in
]%
{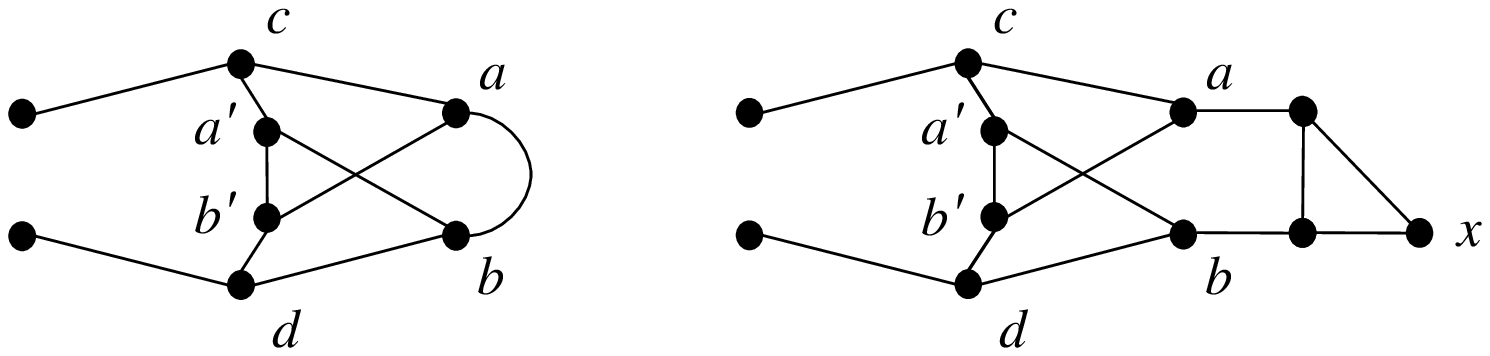}%
\caption{Two pairs of twins in $G_{x}$ are not twins in $G$.}%
\label{ccircf4}%
\end{center}
\end{figure}

Now, notice that Figure \ref{ccircf5} indicates that the same sort of twin
disruption occurs in a cubic graph $G^{\prime}$ that is smaller than $G$. More
generally, it is clear that $G$ and $G^{\prime}$ have precisely the same pairs
of twin vertices. The minimality of $G$ requires that $G^{\prime}$ not be a
circle graph.
\begin{figure}
[ptb]
\begin{center}
\includegraphics[
trim=1.739295in 8.837497in 2.007794in 1.073046in,
height=0.8484in,
width=3.5916in
]%
{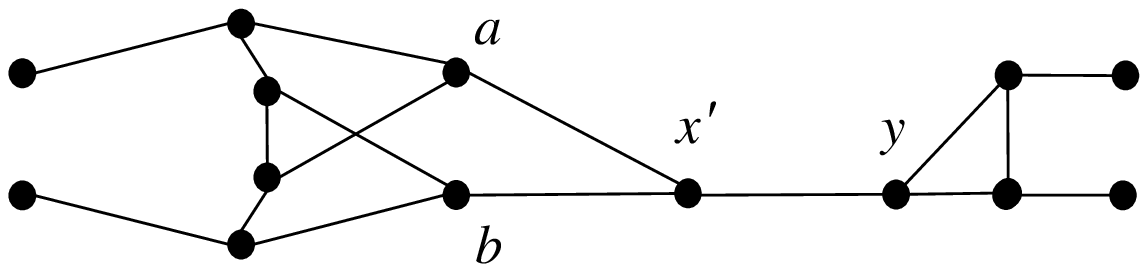}%
\caption{Two pairs of twins in $G_{x}$ are not twins in $G^{\prime}$.}%
\label{ccircf5}%
\end{center}
\end{figure}

However, we can obtain $G^{\prime}$ from $G$ as follows. Let $C_{x}$ and
$C_{y}$ be the connected components of $G-xy$. Lemma \ref{easy} tells us that
both are circle graphs. Observe that $C_{x}-x$ has two vertices of degree 2,
the neighbors of $x$ in $C_{x}$. Denote these neighbors $x^{\prime}$ and
$x^{\prime\prime}$, take the local complement of $C_{x}-x$ with respect to
$x^{\prime\prime}$, and then remove $x^{\prime\prime}$. Denote the resulting
graph $C_{x}^{\prime}$. We obtain $G^{\prime}$ from $C_{x}^{\prime}$ and
$C_{y}$ by attaching $x^{\prime}$ to $y$ with an edge, so Lemma \ref{easy}
tells us that $G^{\prime}$ is a circle graph.
\end{proof}

\begin{proposition}
\label{cut2}Let $G$ be a cubic circle graph that does not have two pairs of
twin vertices, and is of the smallest order for such a graph. Then $G$ has no cutpoint.
\end{proposition}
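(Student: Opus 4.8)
The plan is to argue by contradiction, so assume the minimal counterexample $G$ has a cutpoint. By Proposition~\ref{cut0} it then has precisely two cutpoints $x$ and $y$; moreover, as in the proof of that proposition, one of the three edges at $x$ is an isthmus $e=xy$ whose other end $y$ is the second cutpoint. Thus $G-e$ is the disjoint union of the component $C_{x}$ containing $x$ and the component $C_{y}$ containing $y$, and by Lemma~\ref{easy} both $C_{x}$ and $C_{y}$ are circle graphs. By Proposition~\ref{cut1} we may relabel so that $x$ lies on no $3$-circuit of $G$; since the two neighbours $x'$ and $x''$ of $x$ lying in $C_{x}$ are separated from the third neighbour $y$ by $x$ itself, this just says $x'x''\notin E(G)$. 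So in $C_{x}$ the vertex $x$ has degree $2$ and nonadjacent neighbours.

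Next I would \emph{suppress} $x$: let $D$ be obtained from $C_{x}$ by taking the local complement at $x$ and then deleting $x$, i.e.\ $D=(C_{x}-x)+x'x''$. Both operations preserve the class of circle graphs, so $D$ is a circle graph; and local complementation at $x$ alters only the adjacency of the pair $\{x',x''\}$, turning the nonedge $x'x''$ into an edge, so every vertex of $D$ still has degree $3$ and $D$ is cubic, with fewer vertices than $G$. By the minimality of $G$, the graph $D$ has two disjoint pairs of twins, $P_{1}$ and $P_{2}$.

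Now I would transfer these to $G$. The point is that $N_{D}(v)=N_{G}(v)$ for every $v\neq x',x''$, while $N_{D}(x')=(N_{G}(x')\setminus\{x\})\cup\{x''\}$ and symmetrically for $x''$. Hence a twin pair of $D$ avoiding both $x'$ and $x''$ is already a twin pair of $G$, and the pair $\{x',x''\}$, if it occurs, gives nonadjacent twins of $G$ (their $G$-neighbourhoods agree). If neither $P_{i}$ meets $\{x',x''\}$ in a single vertex we are finished: $G$ has two disjoint twin pairs, contradicting that $G$ is a counterexample. Otherwise one pair, say $P_{1}$, is $\{x',t\}$ (or the symmetric $\{x'',t\}$) with $t\notin\{x',x''\}$; unwinding the twin condition in $D$ against the description of $N_{D}(x')$ then pins down a very rigid local picture in $G$ --- in each case an induced $4$-cycle near $x$ carrying some pendant edges, two of which may coincide. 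I propose to eliminate each such picture using: the fact that $G$ has no cutpoint other than $x$ and $y$ (whenever the would-be twin relation spills over, some vertex of the picture is forced to separate a piece of $C_{x}$ inside $G$, which is impossible); the fact, recorded at the start of Section~3, that a minimal counterexample has at most one pair of twins (two intersecting twin pairs in a cubic graph create a $K_{4}$ or a $K_{3,3}$ component, and such a component by itself supplies two disjoint twin pairs); and, for any configuration that survives, Bouchet's obstruction theorem (or a further application of minimality). In every case $G$ ends up with two disjoint twin pairs, or a direct contradiction, so $G$ has no cutpoint.

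The main obstacle is precisely this last case analysis. The benign sub-cases are routine --- for instance, if $x'$ and $x''$ acquire a common third neighbour they become nonadjacent twins of $G$, and if the two $G$-neighbours of $x'$ other than $x$ turn out to be adjacent they form a twin pair of $G$ --- each such pair being disjoint from $P_{2}$, or else intersecting it and thereby forcing a $K_{4}$ or $K_{3,3}$ component. The awkward sub-case is the generic one: an induced $4$-cycle with four distinct pendant edges and no twins visible on it. There one must track every possible coincidence among the pendant vertices --- keeping in mind that one of them lies across the isthmus $xy$ and so can never certify a twin on the $C_{x}$ side --- and show that each surviving possibility either forces a third cutpoint, contradicting Proposition~\ref{cut0}, or yields a graph locally equivalent to $G$ with one of Bouchet's obstructions as an induced subgraph. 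Carrying out that bookkeeping cleanly, ideally with the help of a figure, is the real work of the proof.
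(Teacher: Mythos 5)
Your opening reduction (two cutpoints, the isthmus $xy$, normalizing so that $x$ lies on no $3$-circuit, suppressing the degree-$2$ vertex $x$ in $C_{x}$ to obtain a smaller cubic circle graph, and pulling twin pairs back to $G$) matches the paper's setup, and your bookkeeping of which pairs survive is correct. But there is a genuine gap in the branch you label ``otherwise''. If exactly one of $P_{1},P_{2}$ meets $\{x',x''\}$ in a single vertex, the other pair survives and $G$ has \emph{one} twin pair --- which is perfectly consistent with $G$ being a counterexample, since a counterexample may have one pair of twins. And a single disrupted pair $\{x',t\}$ does not pin down a self-contradictory local picture: for instance, if $x'$ and $t$ are adjacent twins of $D$ one gets $N_{G}(x')=\{x,t,s\}$, $N_{G}(t)=\{x',x'',s\}$, $N_{G}(x'')=\{x,t,u\}$, and when $s\neq u$ nothing here forces a third cutpoint, a second twin pair, or an obstruction; the configuration embeds in larger graphs without incident. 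So the plan to ``eliminate each such picture'' by inspecting $C_{x}$ alone cannot succeed.

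The missing idea is the two-sided argument that already drives Proposition~\ref{cut1}: build the analogous smaller cubic circle graphs on \emph{both} sides of the isthmus. Each has two disjoint twin pairs by minimality, and a pair surviving from the $C_{x}$ side is automatically disjoint from one surviving from the $C_{y}$ side; hence if $G$ is a counterexample, one side must have \emph{all} of its twin pairs disrupted. Only then are you in the rigid situation of two disjoint disrupted pairs, one through each of the two modified vertices, and that is the configuration the paper's case analysis actually refutes --- chiefly by constructing a smaller circle graph with the same twins as $G$ and invoking minimality a second time, rather than by exhibiting a Bouchet obstruction. You would also need to treat the possibility that $y$ lies on a $3$-circuit (your normalization only controls $x$; the paper falls back on the argument of Proposition~\ref{cut1} for that side). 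Finally, the case analysis you defer as ``the real work'' is indeed the bulk of the proof, so even after the logical repair the proposal remains an outline with the decisive steps missing.
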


\begin{proof}
Suppose instead that such a $G$ has a cutpoint, $x$. According to Propositions
\ref{cut0} and \ref{cut1}, $x$ has a neighbor $y$ that is the only other
cutpoint of $G$, and we may presume that $x$ does not appear on a 3-circuit of
$G$. Consequently $G$ must fall into one of the two cases pictured on the
left-hand side of Figure \ref{ccircf6}. Let $G_{x}$ and $G_{y}$ be the two
smaller graphs pictured on the right-hand side of Figure \ref{ccircf6}. Each
of $G_{x}$, $G_{y}$ is obtained from $G$ using vertex deletions and local
complementations, so each of $G_{x}$, $G_{y}$ is a circle graph; the
minimality of $G$ implies that each of $G_{x}$, $G_{y}$ has two pairs of twin
vertices.%
\begin{figure}
[ptb]
\begin{center}
\includegraphics[
trim=1.336547in 7.904222in 0.803797in 1.207315in,
height=1.4468in,
width=4.7937in
]%
{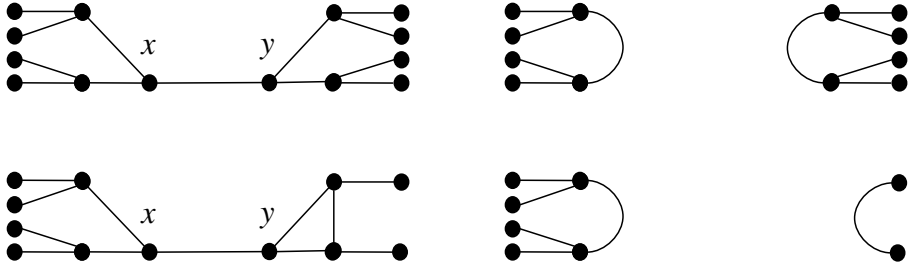}%
\caption{The two cases of Proposition \ref{cut2}.}%
\label{ccircf6}%
\end{center}
\end{figure}

Most of the proof consists of a verification of the following.

\textbf{Claim}. It cannot be that no twins of $G_{x}$ are twins in $G$.

Suppose the claim is false. As in the proof of Proposition \ref{cut1}, this
requires that the two neighbors of $x$ in $G_{x}$ appear in two disjoint twin
pairs, which are disrupted in $G$. Let the neighbors be denoted $a$ and $b$,
and suppose $a^{\prime}$ and $b^{\prime}$ are twins of $a$ and $b$ in $G_{x}$.

Suppose $a$ and $a^{\prime}$ are adjacent twins in $G_{x}$. If $b$ and
$b^{\prime}$ are adjacent too, then $N_{G}(a^{\prime})=\{a,b,b^{\prime}\}$ and
$N_{G}(b^{\prime})=\{a,a^{\prime},b\}$, so $a^{\prime}$ and $b^{\prime}$ are
adjacent twins in $G$. This contradiction tells us that $b$ and $b^{\prime}$
are not adjacent, so there is a vertex $z$ such that $N_{G_{x}}(b)=N_{G_{x}%
}(b^{\prime})=\{a,a^{\prime},z\}$. Then $z$ is a cutpoint of $G$, which
separates $\{a,a^{\prime},b,b^{\prime},x\}$ from the rest of $G_{x}$. We see
that $a$ and $a^{\prime}$ cannot be adjacent; the same argument shows that $b$
and $b^{\prime}$ are nonadjacent too.

There are two configurations to consider, indicated in Figure \ref{ccircf7a}.
In both configurations, there must be no twins in the portion of $G_{x}$ that
is not pictured in Figure \ref{ccircf7a}. To see why one of these
configurations must apply, notice that $a^{\prime}$ and $b^{\prime}$ cannot
share a neighbor, because a shared neighbor would be a cutpoint. Consequently
the two unlabeled vertices of Figure \ref{ccircf7a} are indeed distinct. If
these two vertices were adjacent, they would be twins of $a^{\prime}$ and
$b^{\prime}$ in $G$. And if these two vertices were to share a neighbor, then
that neighbor would be a cutpoint.

Consider the first configuration, indicated on the left in Figure
\ref{ccircf7a}. Let $G^{\prime}$ be the graph obtained from $G$ by removing
$a$, $a^{\prime}$, $b$, $b^{\prime}$ and the two unlabeled vertices that
appear in the figure, and then inserting the edges $cx$ and $dx$. Then
$G^{\prime}$ can also be obtained from $G$ as follows: First, remove
$a^{\prime}$ and $b^{\prime}$. Then, perform a local complementation followed
by a vertex deletion at $a$, $b$, and each of the two unlabeled vertices. It
follows that $G^{\prime}$ is a cubic circle graph. The minimality of $G$
requires that $G^{\prime}$ have at least one pair of twin vertices that are
not twins in $G$; this pair can only be $\{c,d\}$. But then the unpictured
third neighbors of $c$ and $d$ are the same, and this vertex is a cutpoint of
$G$.
\begin{figure}
[ptb]
\begin{center}
\includegraphics[
trim=1.471646in 8.668011in 0.536148in 1.207315in,
height=0.8786in,
width=4.8931in
]%
{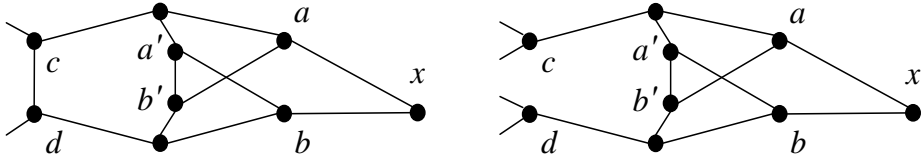}%
\caption{Two configurations.}%
\label{ccircf7a}%
\end{center}
\end{figure}
\begin{figure}
[ptb]
\begin{center}
\includegraphics[
trim=1.328900in 4.141407in 2.151390in 2.541193in,
height=3.2733in,
width=3.7905in
]%
{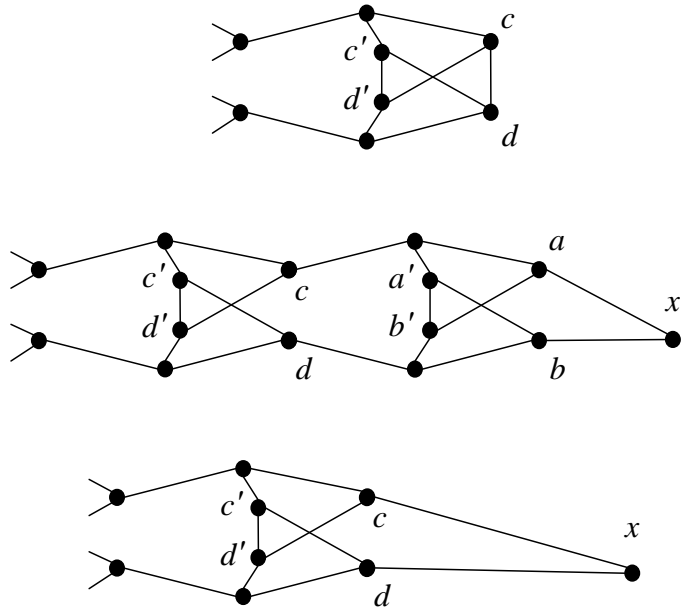}%
\caption{The graph $G^{\prime}$ at the bottom has the same twins as $G$.}%
\label{ccircf7}%
\end{center}
\end{figure}

The second configuration resembles the first, but $c$ and $d$ are not
neighbors. Consider the smaller graph obtained from $G_{x}$ by first deleting
the vertices\ $a^{\prime}$ and $b^{\prime}$, then taking the local complement
with respect to each of the four resulting degree-2 vertices in turn, and
deleting it. This smaller graph must have two pairs of twin vertices, and both
of these twin pairs must be disrupted in $G$. This can only occur if each pair
of twins includes one of $c,d$. That is, the smaller graph must be as pictured
at the top of Figure \ref{ccircf7}. The other information in Figure
\ref{ccircf7} follows from our hypotheses: the unlabeled neighbors of
$c^{\prime}$ and $d^{\prime}$ must be distinct (as $G$ has no cutpoint other
than $x$ and $y$); they must be nonadjacent (otherwise they would be twins of
$c^{\prime}$ and $d^{\prime}$); and they must not share a neighbor (as $G$ has
no cutpoint other than $x$ and $y$).

But then $G$ has the structure indicated in the second row of Figure
\ref{ccircf7}, and clearly the graph $G^{\prime}$ indicated at the bottom of
the figure has the same twin vertices $G$ has. This graph is a circle graph,
because it can be obtained from $G$ as follows. First, delete the
vertices\ $a^{\prime}$ and $b^{\prime}$. There are now two unlabeled vertices
of degree 2; one of them has $a$ and $c$ as neighbors and the other has $b$
and $d$ as neighbors. Take the local complements with respect to these two
unlabeled vertices, and then delete them. Now, take the local complements with
respect to $a$ and $b$; after that, delete $a$ and $b$.

We have verified our claim: if no twins of $G_{x}$ are twins in $G$, then the
minimality of $G$ is contradicted. As $G$ does not have two pairs of twins,
the claim implies that no twins of $G_{y}$ are twins in $G$. But if $y$ is not
incident on a 3-circuit then the argument just given contradicts the
minimality of $G$, and if $y$ is incident on a 3-circuit then the argument of
Proposition \ref{cut1} contradicts the minimality of $G$.
\end{proof}

\subsection{A minimal counterexample is 3-connected}

Suppose $G$ is a cubic circle graph that does not have two pairs of twin
vertices, and is of the smallest order for such a graph. We have seen that $G$
must be 2-connected.

Suppose $x\neq y$ and $\{x,y\}$ is a minimal vertex cut of $G$. Each of $x,y$
is of degree 3 in $G$, so for each of them there is a component of $G-x-y$
connected to that vertex by only one edge.

Case 1. Suppose $x$ and $y$ are neighbors. Each then has only one neighbor in
each component of $G-x-y$. If the neighbors in one component are adjacent to
each other, then they form a vertex cut with the same properties as $\{x,y\}$.
We may assume that those two were originally labeled $x$ and $y$, and repeat
this relabeling process as many times as possible. We may do the same thing
with respect to the other component of $G-x-y$, ultimately obtaining the
picture of $G$ indicated on the left-hand side of Figure \ref{ccircf12}, in
which neither $a$ and $b$ nor $a^{\prime}$ and $b^{\prime}$ are neighbors. Let
$H$ and $H^{\prime}$ be the smaller graphs obtained from $G$ as indicated on
the right-hand side of the figure. Then $H$ and $H^{\prime}$ are both cubic
graphs. Moreover, both are circle graphs, as they can be obtained from $G$
using local complementations and vertex deletions; to obtain $H$, for
instance, we delete all vertices outside $C$ except for $x$ and $y$, perform
local complementations at $x$ and $y$, and then delete $x$ and $y$.%

\begin{figure}
[ptb]
\begin{center}
\includegraphics[
trim=1.333148in 8.563457in 0.669548in 1.209515in,
height=0.9513in,
width=4.8983in
]%
{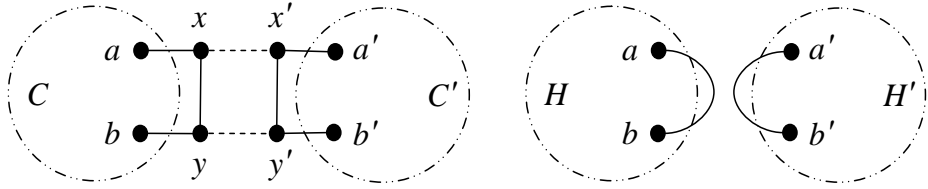}%
\caption{$G$, $H$ and $H^{\prime}$ in case 1.}%
\label{ccircf12}%
\end{center}
\end{figure}

As $G$ does not have two pairs of twin vertices, one of $H,H^{\prime}$ must
have the property that all of its pairs of twin vertices are disrupted in $G$.
We presume that $H$ has this property. The minimality of $G$ implies that $G$
has vertices $c$ and $d$ that are twins of $a$ and $b$ (respectively) in $H$.
If $a$ and $c$ are adjacent then $N_{G}(a)=\{c,d,x\}$ and $N_{G}%
(c)=\{a,b,d\}$. It cannot be that $b$ and $d$ are adjacent too, for if they
were then we would have $N_{G}(b)=\{c,d,y\}$ and $N_{G}(d)=\{a,b,c\}$,
implying that $c$ and $d$ are adjacent twins in $G$. Consequently $b$ and $d$
share a neighbor $e\notin\{a,b,c,d,x,y\}$, and we have $N_{G}(b)=\{c,e,y\}$
and $N_{G}(d)=\{a,c,e\}$. But then $e$ is a cutpoint, separating $\{a,b,c,d\}$
from the rest of $C$. As $G$ has no cutpoint, we conclude that $a$ and $c$ are
not adjacent; similarly, $b$ and $d$ are not adjacent.

It follows that $G$ has vertices $e$ and $f$ such that $N_{G}(a)=\{d,e,x\}$,
$N_{G}(b)=\{c,f,y\}$, $N_{G}(c)=\{b,d,e\}$ and $N_{G}(d)=\{a,c,f\}$. As $e$
and $f$ are of degree 3 in $G$, they must be distinct. If $e$ and $f$ were
adjacent, then $\{c,f\}$ and $\{d,e\}$ would be twin pairs in $G$; hence $e$
and $f$ are not adjacent. Let $g$ and $h$ be the third neighbors of $e$ and
$f$, respectively; $g$ and $h$ must be distinct as $G$ has no cutpoint. The
situation in $G$ is pictured on the left in Figure \ref{ccircf12a}. Let
$G^{\prime}$ be the smaller graph obtained from $G$ as indicated on the right
in\ Figure \ref{ccircf12a}. Notice that whether or not $g$ and $h$ are
adjacent, neither can have a twin in either $G$ or $G^{\prime}$; clearly then
all twins of $G^{\prime}$ are also twins of $G$, so $G^{\prime}$ does not have
two pairs of twins. Also, $G^{\prime}$ is a circle graph, because it can be
obtained from $G$ by first deleting $c$ and $d$, and then performing local
complementations and vertex deletions at $a$, $b$, $e$ and $f$. But this
contradicts the minimality of $G$.%

\begin{figure}
[ptb]
\begin{center}
\includegraphics[
trim=1.471646in 8.426988in 1.075695in 1.209515in,
height=1.0551in,
width=4.4901in
]%
{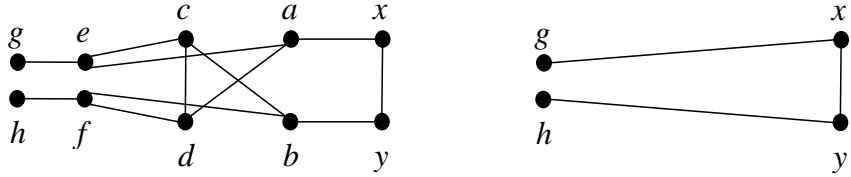}%
\caption{The minimality of $G$ is contradicted in case 1.}%
\label{ccircf12a}%
\end{center}
\end{figure}

Case 2. Suppose now that $x$ and $y$ are not neighbors. We denote by $C$ the
component of $G-x-y$ that is connected to $y$ by only one edge. We claim that
we may suppose without loss of generality that $C$ is connected to $x$ by two
edges, as indicated on the left-hand side of Figure \ref{ccircf13}. Suppose
instead that $C$ is connected to $x$ by only one edge; then $x$ has only one
neighbor in $C$, $x^{\prime}$ say. Necessarily $x^{\prime}$ is not the
neighbor of $y$ in $C$; if it were, it would be a cutpoint. It follows that
$\{x^{\prime},y\}$ is a minimal vertex cut in $G$, the induced subgraph
$G[V(C)-x^{\prime}]$ is a component of $G-x^{\prime}-y$, and $x^{\prime}$ is
attached to this component by two edges while $y$ is attached to this
component by only one edge.

Having verified our claim, we presume that two edges connect $x$ to $C$. If
$x$ is adjacent to the neighbor of $y$ in $C$, then $x$ and this neighbor
constitute a 2-element vertex cut of $G$, and we may apply the argument of
case 1. The argument of case 1 applies also if $x$ and $y$ share a neighbor in
$C^{\prime}$, so we may proceed with the assumption that $x$ and $y$ do not
share a neighbor. That is, the vertices denoted $a$, $b$, $c$, $a^{\prime}$,
$b^{\prime}$ and $c^{\prime}$ in Figure \ref{ccircf13} are all distinct.

Let $H$ and $H^{\prime}$ be the two smaller graphs indicated in Figure
\ref{ccircf13}. We claim that they are circle graphs. It is enough to explain
why $H$ is a circle graph. Choose a shortest path from $a^{\prime}$ to one of
$b^{\prime}$, $c^{\prime}$ in $C^{\prime}$. (N.b. Such a path must exist as
$a^{\prime}$ is not a cutpoint of $G$.) Let $H^{\ast}$ be the graph obtained
from $G$ by deleting all vertices of $C^{\prime}$ that do not lie on this
path. Then $H^{\ast}$ consists of the induced subgraph of $G$ with vertex set
$V(C)\cup\{x\}$, and a path of degree-2 vertices connecting $x$ to $c$. Each
of these degree-2 vertices may be removed, by performing a local
complementation and a vertex deletion. The result is $H$.%

\begin{figure}
[ptb]
\begin{center}
\includegraphics[
trim=1.309357in 8.432490in 0.669548in 1.275549in,
height=0.9989in,
width=4.9165in
]%
{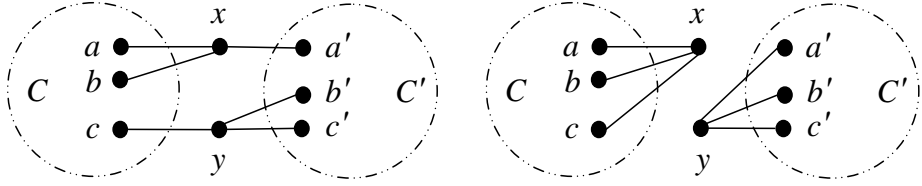}%
\caption{$G$, $H$ and $H^{\prime}$ in case 2.}%
\label{ccircf13}%
\end{center}
\end{figure}

As $H$ and $H^{\prime}$ are cubic circle graphs smaller than $G$, each of them
has two disjoint pairs of twin vertices. As $G$ does not have two disjoint
pairs of twin vertices, there must be one of $H$, $H^{\prime}$ for which all
pairs of twins are disrupted in $G$. We presume that no twins of $H$ are twins
in $G$.

The only vertices of $H$ with different neighbors in $G$ and $H$ are $c$ and
$x$. Consequently, it must be that $c$ and $x$ are elements of two disjoint
pairs of twins in $H$, $\{x,x^{\prime}\}$ and $\{c,c^{\prime}\}$. Then
$c^{\prime}$ is a neighbor of $x$, so $c^{\prime}$ is one of $a$, $b$; we may
presume that $c^{\prime}=b$.

Suppose $x^{\prime}$ is an adjacent twin of $x$; then it must be one of $a$,
$b$. (N.b. This situation is not illustrated in a figure.) As the pairs
$\{x,x^{\prime}\}$ and $\{c,c^{\prime}\}$ are disjoint, it must be that
$x^{\prime}=a$. Then $b$ and $c$ are both neighbors of $a$. If $b$ and $c$
were adjacent, it would follow that $a$ and $b$ are adjacent twins in $G$,
contrary to the hypothesis that no twin vertices of $H$ are twin vertices of
$G$. We conclude that $b$ and $c$ are nonadjacent twins in $H$. Consequently
there is a vertex $z$ of $C$ such that $N_{H}(b)=N_{H}(c)=\{a,x,z\}$. This
cannot happen, though, because every path from $x$ to the third neighbor of
$z$ would pass through $z$, i.e., $z$ would be a cutpoint of $G$.

Suppose now that $x^{\prime}$ is a nonadjacent twin of $x$ in $H$; then
$N_{G}(x^{\prime})=N_{H}(x)=\{a,b,c\}$. If $a$ and $b$ were neighbors they
would be adjacent twins in $G$, contrary to hypothesis; so $a$ and $b$ are not
neighbors. The situation in $G$ is pictured on the left-hand side of Figure
\ref{ccircf11}.%

\begin{figure}
[ptb]
\begin{center}
\includegraphics[
trim=1.471646in 8.296021in 0.536148in 1.211716in,
height=1.1502in,
width=4.8931in
]%
{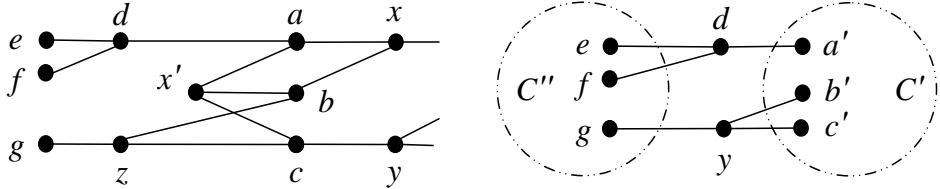}%
\caption{The minimality of $G$ is contradicted in case 2.}%
\label{ccircf11}%
\end{center}
\end{figure}

Consider the graph $G^{\prime}$ indicated on the right-hand side of Figure
\ref{ccircf11}. $G^{\prime}$ is a circle graph because we can obtain it from
$G$ by first deleting $b$ and $x^{\prime}$, and then performing local
complementations and vertex deletions at $a$, $c$, $x$ and $z$. As $G^{\prime
}$ is smaller than $G$, it has two disjoint pairs of twin vertices. But
clearly every pair of twins in $G^{\prime}$ yields a pair of twins in $G$, and
this contradicts the minimality of $G$.

As we have reached contradictions in both cases, we conclude that a minimal
counterexample to Theorem \ref{sharp} must be 3-connected.

\subsection{ No counterexample is 3-connected}

The argument in this subsection is quite different from the earlier arguments,
as it is focused on the properties of double occurrence words. We begin with
the following special case of Bouchet's theorem \cite{Bec} that two double
occurrence words with the same prime interlacement graph must be cyclically equivalent.

\begin{proposition}
\label{unique1} \cite{Bec} Suppose $c>4$ and $C_{c}$ is the cycle graph with
vertices $v_{1},...,v_{c}$ (in order). Then up to cyclic equivalence, the only
double occurrence word with interlacement graph $C_{c}$ is this:%
\begin{equation}
v_{1}v_{c}v_{2}v_{1}v_{3}v_{2}v_{4}...v_{c-1}v_{c-2}v_{c}v_{c-1}
\label{cycleword}%
\end{equation}

\end{proposition}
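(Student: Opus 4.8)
The plan is to obtain Proposition~\ref{unique1} as an instance of the general uniqueness statement attributed to Bouchet in \cite{Bec}: a double occurrence word whose interlacement graph is prime is unique up to cyclic equivalence. So the proof breaks into two independent tasks: (i)~produce one double occurrence word whose interlacement graph is $C_{c}$, and (ii)~verify that $C_{c}$ is prime when $c>4$. Combining (i), (ii) and \cite{Bec} then yields the proposition immediately, with \eqref{cycleword} as the representative of the unique cyclic equivalence class.

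For task (i) I would take the word \eqref{cycleword} and record the positions of the two occurrences of each letter. Reading $v_{1}v_{c}v_{2}v_{1}v_{3}v_{2}v_{4}\ldots v_{c-1}v_{c-2}v_{c}v_{c-1}$ from position $1$ to position $2c$, the letter $v_{1}$ occupies positions $1$ and $4$, the letter $v_{c}$ occupies positions $2$ and $2c-1$, and for $2\leq i\leq c-1$ the letter $v_{i}$ occupies positions $2i-1$ and $2i+2$. From this bookkeeping every letter plainly occurs exactly twice, so \eqref{cycleword} is a double occurrence word, and a short inspection of the position intervals shows that two letters interlace precisely when their indices are consecutive modulo $c$; hence the interlacement graph of \eqref{cycleword} is $C_{c}$. (For instance the pairs $v_{1}v_{2}$ and $v_{1}v_{c}$ produce the patterns $v_{1}v_{2}v_{1}v_{2}$ and $v_{1}v_{c}v_{1}v_{c}$; for $2\leq i\leq c-1$ the intervals of $v_{i}$ and $v_{i+1}$ overlap without either containing the other; and every other pair of letters has disjoint or nested position intervals, where $c>4$ keeps $v_{1}$ and $v_{c}$ clear of the ``middle'' letters.)

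For task (ii) I would show directly that $C_{c}$ has no split when $c>4$. Suppose $(V_{1},X_{1};V_{2},X_{2})$ were a split. If $X_{1}=\varnothing$ or $X_{2}=\varnothing$ then there is no edge of $C_{c}$ between $V_{1}$ and $V_{2}$, contradicting connectedness. If $|X_{1}|\geq2$ and $|X_{2}|\geq2$ then the complete bipartite graph with vertex-classes $X_{1}$ and $X_{2}$, being a subgraph of $C_{c}$, contains a $4$-cycle, which is impossible because $C_{c}$ has girth $c\geq5$. So one of $X_{1},X_{2}$, say $X_{1}$, is a singleton $\{a\}$, and then every edge of $C_{c}$ joining $V_{1}$ to $V_{2}$ is incident with $a$. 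Since $a$ has degree $2$ there are at most two such edges: two of them would put both neighbors of $a$ into $V_{2}$, so $V_{1}\setminus\{a\}$ would be a union of connected components of $C_{c}$ and hence empty, contradicting $|V_{1}|\geq2$; a single one would make $C_{c}$ minus an edge disconnected, which is false since a cycle minus an edge is a path. Thus no split exists, $C_{c}$ is prime, and \cite{Bec} completes the proof.

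Both tasks are essentially routine; the only steps needing any care are getting the position list for \eqref{cycleword} exactly right (and confirming that the interlacements reproduce the cycle and nothing more) and making the split case analysis watertight. A self-contained alternative to citing \cite{Bec} would be to reconstruct the chord diagram directly: the chord $v_{1}$ cuts the circle into two arcs, only $v_{2}$ and $v_{c}$ meet both arcs, and this constraint can be propagated around the cycle using that consecutive $v_{i}$ interlace while non-consecutive ones do not. That route, however, essentially re-proves the pertinent case of Bouchet's theorem, so I would take it only if a proof not relying on \cite{Bec} were desired.
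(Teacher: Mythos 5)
Your proposal is correct and takes the same route as the paper, which states this proposition as a direct special case of Bouchet's theorem from \cite{Bec} that double occurrence words with the same prime interlacement graph are cyclically equivalent. Your added verifications that \eqref{cycleword} realizes $C_{c}$ and that $C_{c}$ is prime for $c>4$ are accurate and simply fill in details the paper leaves implicit.
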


Although $C_{3}$ and $C_{4}$ are not prime, a version of Proposition
\ref{unique1} applies to them too.

\begin{proposition}
\label{unique2}Suppose $c\in\{3,4\}$ and $W$ is a double occurrence word with
$\mathcal{I}(W)=C_{c}$. Then the vertices of $C_{c}$ may be indexed in such a
way that $v_{1},...,v_{c}$ appear in this order on the cycle, and $W$ is
cyclically equivalent to $v_{1}v_{3}v_{2}v_{1}v_{3}v_{2}$ (if $c=3$) or
$v_{1}v_{4}v_{2}v_{1}v_{3}v_{2}v_{4}v_{3}$ (if $c=4$).
\end{proposition}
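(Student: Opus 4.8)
The plan is to prove both cases by a direct analysis of the chord diagram of $W$, without invoking Bouchet's theorem; the two graphs in question are small enough that an explicit enumeration is painless. The one tool I would set up first is the elementary observation that a letter $Y$ is a neighbour of a letter $X$ in $\mathcal{I}(W)$ if and only if exactly one of the two occurrences of $Y$ lies strictly between the two occurrences of $X$; equivalently, the letters that occur exactly once in the block of $W$ lying strictly between the two occurrences of $X$ are precisely the neighbours of $X$. After a cyclic permutation I may assume $W$ begins with a prescribed letter, which I will name $v_{1}$; the remaining vertex indices will be fixed as the argument proceeds, always subject to the requirement that $v_{1},\dots,v_{c}$ be consecutive along the cycle — a requirement that for $c=3$ imposes nothing (every labelling of $K_{3}$ works) and for $c=4$ leaves a dihedral group of eight admissible labellings.

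For $c=3$, write $W=v_{1}w_{2}\cdots w_{6}$ and let $p$ be the position of the second $v_{1}$. The block $w_{2}\cdots w_{p-1}$ must consist of the two neighbours of $v_{1}$, each appearing once, so $p-2=2$ and $W=v_{1}w_{2}w_{3}v_{1}w_{5}w_{6}$ with $\{w_{2},w_{3}\}$ equal to the two remaining letters. I would index these so that $w_{2}=v_{3}$ and $w_{3}=v_{2}$; positions $5$ and $6$ then carry the second occurrences of $v_{2}$ and $v_{3}$. A one-line check of when the chords $\{3,q_{v_{2}}\}$ and $\{2,q_{v_{3}}\}$ cross, where $\{q_{v_{2}},q_{v_{3}}\}=\{5,6\}$, shows that the edge $v_{2}v_{3}$ forces $q_{v_{3}}=5$ and $q_{v_{2}}=6$. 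So $W$ is cyclically equivalent to $v_{1}v_{3}v_{2}v_{1}v_{3}v_{2}$, as claimed.

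For $c=4$ I would begin by pinning down $p$, the position of the second $v_{1}$. The two occurrences of $v_{1}$ cut the eight positions into two arcs of lengths $p-2$ and $8-p$, and a cyclic permutation of $W$ interchanges them, so I may assume $p-2\le 8-p$. The shorter arc must contain each neighbour of $v_{1}$ exactly once and the single non-neighbour of $v_{1}$ either zero or two times, so $p-2\in\{2,4\}$; combined with $p-2\le 8-p$ this gives $p=4$. Hence $W=v_{1}w_{2}w_{3}v_{1}w_{5}w_{6}w_{7}w_{8}$, where $\{w_{2},w_{3}\}$ are the two neighbours of $v_{1}$ and $w_{5},\dots,w_{8}$ rearrange the two occurrences of the non-neighbour of $v_{1}$ together with one further occurrence of each of $w_{2},w_{3}$. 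I would index the cycle so that $w_{2}=v_{4}$ and $w_{3}=v_{2}$, making $v_{3}$ the non-neighbour of $v_{1}$; both occurrences of $v_{3}$ then lie in positions $5,\dots,8$, so $v_{1}v_{3}\notin E(C_{4})$ is automatic. Letting $q_{v_{2}},q_{v_{4}}$ be the positions in $\{5,\dots,8\}$ of the second occurrences of $v_{2},v_{4}$ and $q_{v_{3}}<q_{v_{3}}'$ those of $v_{3}$, the three remaining conditions translate, after the relevant chord-crossing checks, into $q_{v_{2}}<q_{v_{4}}$ (from $v_{2}v_{4}\notin E$), $q_{v_{3}}<q_{v_{4}}<q_{v_{3}}'$ (from $v_{3}v_{4}\in E$), and $q_{v_{3}}<q_{v_{2}}<q_{v_{3}}'$ (from $v_{2}v_{3}\in E$). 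Since $\{q_{v_{2}},q_{v_{3}},q_{v_{3}}',q_{v_{4}}\}=\{5,6,7,8\}$, the last two conditions force $q_{v_{3}}=5$ and $q_{v_{3}}'=8$, and then the first forces $q_{v_{2}}=6$ and $q_{v_{4}}=7$. Therefore $W$ is cyclically equivalent to $v_{1}v_{4}v_{2}v_{1}v_{3}v_{2}v_{4}v_{3}$.

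The individual chord-crossing verifications are routine. The only steps that need genuine care are the reduction to $p=4$ in the $c=4$ case — specifically the observation that a window of four positions between the two $v_{1}$'s is cyclically equivalent to a window of two, and the parity argument ruling out a window of three — and the bookkeeping that keeps every re-indexing of the vertices admissible for $C_{c}$; in the $c=4$ case the only relabelling freedom I actually use is the transposition of $v_{2}$ and $v_{4}$.
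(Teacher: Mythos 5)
Your proof is correct and is essentially the same kind of argument as the paper's: a direct, elementary enumeration of the possible positions of the letters in $W$, using the interlacement conditions to force the word into the stated form. The only difference is organizational — you first pin down the window between the two occurrences of $v_{1}$ and then solve the chord-crossing constraints, whereas the paper determines the letters $w_{2},w_{3},\dots$ left to right via degree arguments and handles one subcase by reversing the word — but both routes are the same exhaustive case check.
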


\begin{proof}
Suppose first that $W=w_{1}w_{2}w_{3}w_{4}w_{5}w_{6}$ has $\mathcal{I}%
(W)=C_{3}$. If $w_{1}$, $w_{2}$ and $w_{3}$ are not distinct letters then one
of them has degree $\leq1$ in the interlacement graph, a contradiction. If
$w_{4}$ is not the same letter as $w_{1}$ then the degree of $w_{4}$ in the
interlacement graph is $\leq1$; hence $w_{1}w_{2}w_{3}w_{4}=v_{1}v_{2}%
v_{3}v_{1}$. If $w_{5}$ is $v_{3}$ then $v_{3}$ does not neighbor $v_{2}$ in
$\mathcal{I}(W)$, a contradiction; hence $w_{1}w_{2}w_{3}w_{4}w_{5}=v_{1}%
v_{2}v_{3}v_{1}v_{2}$ and of course $w_{6}=v_{3}$ as that is the only
remaining possibility.

Now suppose $W=w_{1}w_{2}w_{3}w_{4}w_{5}w_{6}w_{7}w_{8}$ has $\mathcal{I}%
(W)=C_{4}$. Let $v_{1}$ denote the vertex corresponding to $w_{1}$. Suppose
the vertex corresponding to $w_{2}$ is adjacent to $v_{1}$ in $C_{4}$, and
denote the corresponding vertex $v_{4}$. The vertex corresponding to $w_{3}$
cannot be either $v_{1}$ or $v_{4}$, for if it were then its degree in
$\mathcal{I}(W)$ would be $<2$. Call this vertex $v_{2}$. If $v_{2}$ is not
adjacent to $v_{1}$ then $W$ must be of the form $v_{1}v_{4}v_{2}%
...v_{2}...v_{1}...$ But then it is impossible to place the second appearance
of $v_{4}$ so as to interlace both $v_{1}$ and $v_{2}$. Consequently $v_{2}$
is adjacent to $v_{1}$. The vertex corresponding to $w_{4}$ cannot be either
$v_{2}$ or $v_{4}$, for if it were then its degree would be $<2$. If it is the
remaining vertex $v_{3}$ then as $v_{1}$ and $v_{3}$ are not adjacent in
$C_{4}$, $W$ is of the form $v_{1}v_{4}v_{2}v_{3}...v_{3}...v_{1}...$ and it
is impossible to locate the second appearance of $v_{2}$ so as to interlace
both $v_{1}$ and $v_{3}$. Consequently $w_{4}$ is the second appearance of
$v_{1}$, and $W$ is of the form $v_{1}v_{4}v_{2}v_{1}w_{5}w_{6}w_{7}w_{8}$.
The remaining vertex $v_{3}$ is of degree 2 in $\mathcal{I}(W)$, and this can
only happen if $w_{5}$ and $w_{8}$ are its two appearances. Necessarily then
$w_{6}$ and $w_{7}$ are $v_{2}$ and $v_{4}$; as they are not neighbors in
$\mathcal{I}(W)$, $W$ must be of the form $v_{1}v_{4}v_{2}v_{1}v_{3}v_{2}%
v_{4}v_{3}$.

It remains to consider the possibility that $W=w_{1}w_{2}w_{3}w_{4}w_{5}%
w_{6}w_{7}w_{8}$ has $\mathcal{I}(W)=C_{4}$, $v_{1}$ denotes the vertex
corresponding to $w_{1}$, and the vertex corresponding to $w_{2}$ is not
adjacent to $v_{1}$ in $C_{4}$. In this case we observe that the vertex
corresponding to $w_{8}$ cannot be the vertex corresponding to either $w_{1}$
or $w_{2}$; if it were its degree in $\mathcal{I}(W)$ would be $<2$.
Consequently $w_{8}$ is a neighbor of $v_{1}$, and we may apply the argument
of the preceding paragraph to the cyclically equivalent word $w_{1}w_{8}%
w_{7}w_{6}w_{5}w_{4}w_{3}w_{2}$.
\end{proof}

Note that a cubic graph cannot be a forest, as it has no vertex of degree 1.
Consequently every cubic graph has circuits. Suppose $W$ is a double
occurrence word whose interlacement graph is $\mathcal{I}(W)=G$, a cubic
circle graph. Let $C$ be a circuit in $G$, of minimal length $c\geq3$. If we
remove from $W$ all occurrences of vertices that do not appear on $C$, we must
obtain a subword $W^{\prime}$ whose interlacement graph is $C$.\ (Note that
the minimality of $c$ guarantees that $C$ has no chord in $G$.) Propositions
\ref{unique1} and \ref{unique2} tell us that we may index the vertices that
appear on $C$ as $v_{1}$, ..., $v_{c}$, in order of their appearance, and
$W^{\prime}$ will be cyclically equivalent to (\ref{cycleword}). Consequently,
we may assume that $W$ is of the form%

\begin{equation}
v_{1}W_{1}v_{c}W_{2}v_{2}W_{3}v_{1}W_{4}v_{3}W_{5}v_{2}W_{6}v_{4}W_{7}%
v_{3}...v_{c-2}W_{2c-2}v_{c}W_{2c-1}v_{c-1}W_{2c}. \label{mainword}%
\end{equation}

When we reference this description of $W$ we will consider the index of
$W_{i}$ modulo $2c$, so that $W_{0}=W_{2c}$, $W_{1}=W_{2c+1}$, etc.

Observe that if $v$ appears in two non-consecutive $W_{i}$ then $v$\ must
neighbor at least two of $v_{1},...,v_{c}$. For instance, a vertex that
appears once in $W_{2}$ and once in $W_{6}$ neighbors $v_{1}$ and $v_{3}$. On
the other hand, a vertex that appears once in each of two consecutive $W_{i}$
neighbors exactly one of $v_{1},...,v_{c}$, and a vertex that appears twice in
the same $W_{i}$ does not neighbor any of $v_{1},...,v_{c}$.

\begin{proposition}
Let $G$ be a cubic circle graph that does not have two pairs of twin vertices.
Then $G$ is not 3-connected.
\end{proposition}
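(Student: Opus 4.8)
The plan is to combine the structural reductions of the previous two subsections with the word-theoretic setup just introduced. By the results of Sections 3.1 and 3.2, a minimal counterexample $G$ to Theorem \ref{sharp} must be 3-connected, so it suffices to derive a contradiction from the assumption that a 3-connected cubic circle graph $G$ has no two pairs of twins. Since $G$ is a circle graph, fix a double occurrence word $W$ with $\mathcal{I}(W)=G$, take a shortest circuit $C$ of length $c\geq 3$, and put $W$ into the normal form \eqref{mainword}. The strategy is to analyze where the non-circuit vertices of $G$ can appear among the blocks $W_1,\dots,W_{2c}$, using the three observations recorded after \eqref{mainword}: a vertex appearing twice in one block is nonadjacent to all of $v_1,\dots,v_c$; a vertex appearing in two consecutive blocks neighbors exactly one $v_i$; and a vertex appearing in two non-consecutive blocks neighbors at least two of the $v_i$.

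First I would exploit 3-connectivity and the minimality of $c$ to pin down the local structure of a vertex $v_i$ on $C$. Each $v_i$ has exactly one neighbor $u_i$ off $C$ (its third neighbor, since two of its neighbors are its circuit-neighbors $v_{i-1},v_{i+1}$; here I would first handle the small cases $c=3,4$ where some $v_i$ might have both off-circuit edges, but 3-regularity plus the biclique/clique remarks from the start of Section 3 dispose of those quickly, or they are absorbed as twins). Using the normal form, the neighbor $u_i$ of $v_i$ must be a vertex that interlaces $v_i$, i.e.\ appears in exactly one of the two blocks flanking each occurrence of $v_i$; a short case check on \eqref{mainword} shows $u_i$ appears once in two consecutive blocks straddling an occurrence of $v_i$, hence (by the observation) neighbors no other $v_j$. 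Then I would argue that if $u_i=u_j$ for some $i\neq j$, the shared vertex together with $v_i,v_j$ tends to force either a short chord of $C$ (contradicting minimality of $c$), a twin pair, or a 2-cut (contradicting 3-connectivity); this should show the $u_i$ are distinct and that $G$ has exactly $2c$ vertices arranged as a ``prism-like'' or ``Möbius-Kantor-like'' structure, i.e.\ $C$ plus a perfect matching to an outer set. At that point the graph is essentially determined by a single permutation/rotation describing how the outer vertices $u_1,\dots,u_c$ are themselves joined, and one checks directly that any such cubic circle graph either has two pairs of twins or fails to be a circle graph (the latter detected via Propositions \ref{unique1}–\ref{unique2} applied to an induced shorter circuit through the $u_i$'s), contradicting the choice of $G$.

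The key steps, in order, would be: (1) reduce to a 3-connected minimal counterexample $G$ and fix $W$ in the form \eqref{mainword} for a shortest circuit $C$; (2) show each $v_i$ has a unique off-circuit neighbor $u_i$, located in two consecutive blocks of \eqref{mainword}, hence adjacent to no other circuit vertex; (3) show the $u_i$ are pairwise distinct, so $|V(G)|=2c$ and $G$ consists of $C$ together with a matching to $\{u_1,\dots,u_c\}$ plus some edges among the $u_i$; (4) translate the admissible configurations of the $u_i$-edges into constraints on \eqref{mainword}, and in each case either exhibit two disjoint twin pairs or violate Proposition \ref{unique1}/\ref{unique2}; (5) conclude the contradiction, so no counterexample is 3-connected, which together with Sections 3.1–3.2 proves Theorem \ref{sharp} and hence this proposition.

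I expect step (4) to be the main obstacle: once the ``circuit plus matching plus outer edges'' skeleton is fixed, the bookkeeping of where the second occurrences of the $u_i$ go in \eqref{mainword}, and of which rotations of the outer cycle are realizable as interlacement words, is where the real work lies, and it is easy to miss a case. A secondary difficulty is making step (3) airtight when $c$ is small, since then ``non-consecutive blocks'' and ``consecutive blocks'' can coincide cyclically and the clean dichotomy of the three observations degenerates; I would treat $c=3$ and $c=4$ by hand at the outset, using the exact words from Proposition \ref{unique2}, and only run the general argument for $c\geq 5$, where Proposition \ref{unique1} gives the rigid form \eqref{cycleword}. Throughout, the twin-pair bookkeeping from Section 3 (adjacent twins forcing a $K_4$ component, nonadjacent twins forcing a $K_{3,3}$ component) will be used to discard degenerate subcases immediately.
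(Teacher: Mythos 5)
Your overall strategy coincides with the paper's: normalize $W$ around a shortest circuit $C$ via \eqref{mainword}, show every off-circuit vertex meets only one circuit vertex, deduce $V(G)=\{v_1,\dots,v_c,u_1,\dots,u_c\}$, and then derive a contradiction from the word. However, two of your steps conceal genuine gaps. First, in step (2) you dismiss the small-$c$ cases by saying that a vertex $v$ adjacent to two circuit vertices is ``disposed of quickly'' by 3-regularity and the clique/biclique remarks. That works when $v$ is adjacent to two \emph{consecutive} circuit vertices (forcing $c=3$ and a $K_4$ component), but the case $c=4$ with $v$ adjacent to the opposite vertices $v_1$ and $v_3$ is not degenerate: it makes $v_1,v_3$ a single pair of nonadjacent twins, which does not contradict ``no two pairs of twins'' and does not force a $K_{3,3}$ component. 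The paper has to work here: it deletes $v_1$, locally complements at $v_3$ and deletes it, obtaining a smaller 3-connected cubic circle graph $H$, and then invokes the minimality of $G$ to find two twin pairs in $H$ and reach a contradiction. You have the minimality hypothesis available but never actually invoke it for this case, and without it the claim does not go through.

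Second, your step (4) --- classifying the possible edge sets among $u_1,\dots,u_c$ and testing each ``rotation'' for realizability via Propositions \ref{unique1}--\ref{unique2} --- is exactly the part you flag as the main obstacle, and as described it is an open-ended case analysis rather than a proof. The paper avoids this entirely with a short forced computation in \eqref{mainword}: since each $W_i$ can only contain the two $u_j$'s assigned to its flanking circuit vertices, the degree-3 requirement on $u_1$ forces $v_2W_3v_1W_4v_3=v_2u_1u_2v_1u_3u_1v_3$, the degree-3 requirement on $u_2$ then forces $W_2=u_2u_c$, and then $W_5$ can only contain $u_3$, leaving $u_3$ with degree 2 --- a contradiction with no enumeration of outer configurations at all. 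If you want to complete your plan, you should replace the proposed classification with this kind of direct positional argument; as written, step (4) is a research plan, not an argument.
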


\begin{proof}
Suppose instead that $G$ is a 3-connected, cubic circle graph, which does not
have two pairs of twin vertices. We may presume that $G$ is of the smallest
possible order for such a graph. Let~$W$ be a double occurrence word of the
form (\ref{mainword}), with $\mathcal{I}(W)=G$.

\textbf{Claim} If $v\notin\{v_{1},...,v_{c}\}$ then either $v$ appears twice
in the same subword $W_{i}$, or $v$ appears once in each of two consecutive
subwords $W_{i}$ and $W_{i+1}$.

Suppose the claim is false; then some $v\notin\{v_{1},...,v_{c}\}$ neighbors
more than one of $v_{1},...,v_{c}$. Cyclically permuting indices if necessary
we may presume that $v$ neighbors $v_{1}$ and some other $v_{j}$.

If $v$ neighbors $v_{j}$, $j>3$, then $v_{1},v,v_{j},...,v_{c}$ is a closed
walk in $G$, of length $<c$. \ This contradicts the choice of $c$. If $v$
neighbors $v_{2}$ then $\{v,v_{1},v_{2}\}$ is a circuit in $G$, so $c=3$. But
then $v_{1}$ and $v_{2}$ both neighbor\ $v$, and also $v_{1}$ and $v_{2}$ both
neighbor $v_{3}$. As $v_{1}$ and $v_{2}$ are neighbors of degree 3, it follows
that $\{v,v_{3}\}$ is a vertex cut, which separates $\{v_{1},v_{2}\}$ from the
rest of $G$. As $G$ is 3-connected, there must be no \textquotedblleft rest of
$G$\textquotedblright\ -- i.e., $v$, $v_{1}$, $v_{2}$ and $v_{3}$ are all the
vertices $G$ has. But then $G$ is a 4-clique, contradicting the hypothesis
that $G$ does not have two disjoint pairs of twin vertices.

Suppose $v$ neighbors $v_{1}$ and $v_{3}$. Then $v,v_{1},v_{2},v_{3}$ is a
closed walk in $G$, so $c\leq4$. If $c=3$ then $v_{1},v_{2},v_{3}$ and
$v_{1},v,v_{3}$ are both 3-circuits in $G$, so $\{v,v_{2}\}$ is a vertex cut
that separates $\{v_{1},v_{3}\}$ from the rest of $G$. Again, this contradicts
either the hypothesis that $G$ is 3-connected or the hypothesis that $G$ does
not have two disjoint pairs of twin vertices.

We conclude that $c=4$. Then $v_{1},v_{2},v_{3},v_{4}$ is a circuit of $G$, so
$v_{1}$ and $v_{3}$ both neighbor $v$, $v_{2}$ and $v_{4}$. Consequently,
$v_{1}$ and $v_{3}$ are nonadjacent twins. No two of $v$, $v_{2}$ and $v_{4}$
may share another neighbor; for if they were to share another neighbor then
they would be twins, and $G$ does not have two pairs of twins. Let $x$,
$x_{2}$ and $x_{4}$ be the third neighbors of $v$, $v_{2}$ and $v_{4}$
respectively, as on the left-hand side of Figure \ref{ccircf2}.%
\begin{figure}
[ptb]
\begin{center}
\includegraphics[
trim=1.604196in 7.898719in 3.210091in 1.073046in,
height=1.5515in,
width=2.7899in
]%
{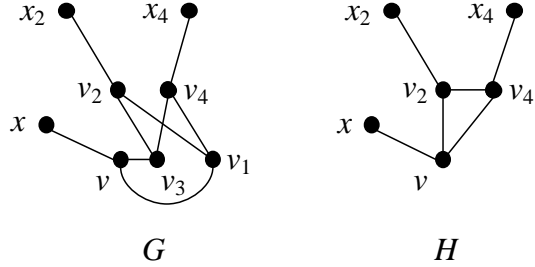}%
\caption{The last case of the claim.}%
\label{ccircf2}%
\end{center}
\end{figure}

Let $H$ be the graph obtained by replacing the pictured portion of $G$ with
the smaller subgraph indicated on the right-hand side of Figure \ref{ccircf2}.
Then $H$ is obtained from $G$ in three steps: delete $v_{1}$, take the local
complement with respect to $v_{3}$, and delete $v_{3}$. Consequently $H$ is a
circle graph. Clearly $H$ is also cubic and 3-connected.

The minimality of $G$ assures us that $H$ has two pairs of twin vertices. The
six indicated vertices of $H$ are all distinct, so no two of them are twins
(as their neighborhoods are distinct). Any twin vertices outside the pictured
portion of $H$ are also twins in $G$, though, and this implies that $G$ has
three pairs of twin vertices, contradicting the hypothesis that it does not
even have two pairs. The contradiction verifies the claim.

As $G$ is cubic and $C$ is a chordless circuit, each $v_{j}$ has precisely one
neighbor $u_{j}$ that does not appear on $C$. The claim tells us that $u_{j}$
appears in two consecutive subwords $W_{i}$ and $W_{i+1}$. As $u_{j}$
neighbors $v_{j}$, $W_{i}v_{j}W_{i+1}v_{k}$ must be a subword of $W$, for some
$k$. Notice that as $W_{i}$ and $W_{i+1}$ do not mention any of $v_{1}$, ...,
$v_{c}$, $u_{j}$ does not neighbor any of $v_{1}$, ..., $v_{c}$ other than
$v_{j}$; this holds for every $j$, so $u_{1}$, ..., $u_{c}$ are pairwise distinct.

If $W_{i+1}$ contains any vertex $v$ other than $u_{j}$ or $u_{k}$, then $v$
appears twice in $W_{i+1}$, so every walk from $v$ to $v_{j}$ in $G$ must pass
through $u_{j}$ or $u_{k}$. As $G$ is 3-connected, it follows that there is no
such $v$. The same argument applies to every $W_{i}$, as the claim implies
that no more than two of $u_{1}$, ..., $u_{c}$ appear in any one $W_{i}$.\ We
conclude that $V(G)=\{u_{1}$, ..., $u_{c}$, $v_{1}$, ..., $v_{c}\}$.

After reversing or cyclically permuting $W$ if necessary, we may presume that
$u_{1}$ appears in $W_{3}$ and $W_{4}$. The degree of $u_{1}$ is 3, so the
subword $v_{2}W_{3}v_{1}W_{4}v_{3}$ of $W$ must be $v_{2}u_{1}u_{2}v_{1}%
u_{3}u_{1}v_{3}$. The degree of $u_{2}$ is also 3, so the subword $v_{c}%
W_{2}v_{2}W_{3}v_{1}W_{4}v_{3}$ must be $v_{c}u_{2}u_{c}v_{2}u_{1}u_{2}%
v_{1}u_{3}u_{1}v_{3}$. Notice that $u_{2}$ cannot appear in $W_{5}$, as it
appears in $W_{2}$ and $W_{3}$; consequently the subword $W_{4}v_{3}W_{5}%
v_{2}$ of $W$ must be $u_{3}u_{1}v_{3}u_{3}v_{2}$. But then the degree of
$u_{3}$ is 2, contradicting the hypothesis that $G$ is 3-regular.
\end{proof}

\section{Corollary \ref{connect}}

In this section we derive Corollary \ref{connect} from Theorem \ref{sharp}.

Suppose $G$ is a cubic circle graph, which is 3-connected. If $G$ has a pair
of adjacent twins, $v$ and $w$, then they share two neighbors, $x$ and $y$. If
$z$ is any other vertex of $G$ then every path from $v$ to $z$ in $G$ must
pass through $x$ or $y$; as $G$ is 3-connected, this cannot be the case.
Consequently $G$ has no other vertex, i.e., $V(G)=\{v,w,x,y\}$. As $G$ is a
cubic graph, $G\cong K_{4}$.

If $G$ has no pair of adjacent twins then Theorem \ref{sharp} tells us that
$G$ has two disjoint pairs of nonadjacent twins, $\{v,v^{\prime}\}$ and
$\{w,w^{\prime}\}$. Suppose $v$ and $w$ are neighbors; then $v$ and
$v^{\prime}$ are neighbors of $w$ and $w^{\prime}$. Let the third neighbor of
$v$ and $v^{\prime}$ be $x$, and let the third neighbor of $w$ and $w^{\prime
}$ be $y$. If $V(G)\not =\{v,v^{\prime},w,w^{\prime},x,y\}$, then every path
from one of $v$, $v^{\prime}$, $w$, $w^{\prime}$ to a vertex outside
$\{v,v^{\prime},w,w^{\prime},x,y\}$ passes through $x$ or $y$. As $G$ is
3-connected, this cannot be the case; we conclude that $V(G)=\{v,v^{\prime
},w,w^{\prime},x,y\}$ and hence $G\cong K_{3,3}$.

Suppose now that $v$ and $w$ are not neighbors; we claim that this is
impossible. The graph $G$ is 3-connected, so Menger's theorem tells us that
there are three internally vertex-disjoint paths from $v$ to $w$. We may
presume that no edge of $G$ connects two non-consecutive vertices of any of
the three paths; for if there is such an edge we may use it to shorten that
path. Let $H$ be the full subgraph of $G$ induced by the vertices on these
three paths, including $v$ and $w$. Then $H$ is a circle graph. Let the three
paths be $v$, $x_{1}$, ..., $x_{p}$, $w$; $v$, $y_{1}$, ..., $y_{q}$, $w$; and
$v$, $z_{1}$, ..., $z_{r}$, $w$. $H$ is pictured on the left in Figure
\ref{ccircf15}.

Note that as $G$ is 3-regular, it must be that $x_{1}\neq x_{p}$; for $x_{1}$
is adjacent to both $v$ and $v^{\prime}$, and $x_{p}$ is adjacent to both $w$
and $w^{\prime}$. The same argument tells us that $y_{1}\neq y_{q}$ and
$z_{1}\neq z_{r}$. As $v^{\prime}$ and $w^{\prime}$ are not vertices of $H$,
$x_{1}$, $y_{1}$, $z_{1}$, $x_{p}$, $y_{q}$ and $z_{r}$ are all of degree 2 in
$H$. Consequently $x_{1}$, $y_{1}$, $z_{1}$, $x_{p}$, $y_{q}$ and $z_{r}$ are
all of degree 3 in the graph $H^{\prime}$ obtained from $H$ by performing
local complementations and vertex deletions at $v$ and $w$. If any vertex of
$H^{\prime}$ is of degree 2, we may remove it by performing a local
complementation and then a vertex deletion. The resulting graph $H^{\prime
\prime}$ differs from $H^{\prime}$ in that some indices may not appear on the
paths $x_{1}$, ..., $x_{p}$; $y_{1}$, ..., $y_{q}$; and $z_{1}$, ..., $z_{r}$.
But each path will still involve at least two distinct vertices, so no two
vertices of $H^{\prime\prime}$ will be twins.

As $H^{\prime\prime}$ is a cubic circle graph, Theorem \ref{sharp} verifies
the claim that this situation is impossible.%
\begin{figure}
[ptb]
\begin{center}
\includegraphics[
trim=2.404594in 8.294921in 0.949093in 1.078549in,
height=1.2522in,
width=3.8891in
]%
{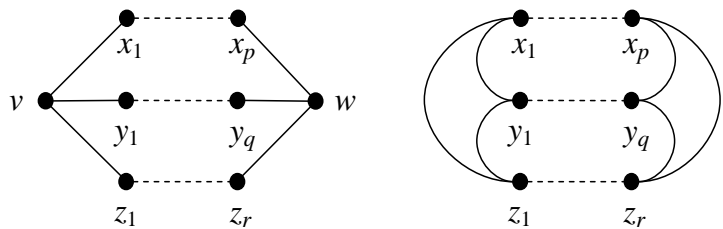}%
\caption{The graphs $H$ (on the left) and $H^{\prime}$, $H^{\prime\prime}$ (on
the right).}%
\label{ccircf15}%
\end{center}
\end{figure}

\end{document}